\def\resetMathstrut@{%
  \setbox\z@\hbox{%
    \mathchardef\@tempa\mathcode`\(\relax
    \def\@tempb##1"##2##3{\the\textfont"##3\char"}%
    \expandafter\@tempb\meaning\@tempa \relax
  }%
  \ht\Mathstrutbox@1.2\ht\z@ \dp\Mathstrutbox@1.2\dp\z@
}
\renewcommand{\le}{\leqslant}
\renewcommand{\ge}{\geqslant}
\renewcommand{\setminus}{\smallsetminus}
\renewcommand{\gamma}{\upgamma}
\newcommand{\ud}[0]{\,\mathrm{d}}
\renewcommand{\det}{\mathrm{det}}
\newcommand{\proj}{\mathsf{Proj}}
\newcommand{\n}{\{1,\ldots,n\}}
\newcommand{\m}{\{1,\ldots,m\}}
\newcommand{\X}{\mathbf X}
\newcommand{\sfA}{\mathsf{A}}
\newcommand{\sfB}{\mathsf{B}}
\newcommand{\sfC}{\mathsf{C}}
\renewcommand{\d}{\delta}
\newcommand{\e}{\varepsilon}
\newcommand{\R}{\mathbb R}
\newcommand{\Q}{\mathbb Q}
\newcommand{\1}{\mathbf 1}
\newcommand{\0}{\mathbf 0}
\newtheorem{theorem}{Theorem}
\newtheorem{lemma}[theorem]{Lemma}
\newtheorem{observation}[theorem]{Observation}
\newtheorem{remark}[theorem]{Remark}
\newtheorem{notation}[theorem]{Notation}
\newtheorem{question}[theorem]{Question}
\renewcommand{\subset}{\subseteq}
\DeclareMathOperator{\trace}{\mathrm{Trace}}
\newcommand{\E}{\mathbb{ E}}
\newcommand{\N}{\mathbb N}
\newcommand{\Z}{\mathbb Z}
\newcommand{\eqdef}{\stackrel{\mathrm{def}}{=}}
\renewcommand{\emptyset}{\varnothing}
\newcommand{\M}{\mathsf{M}}
\newcommand{\GL}{\mathsf{GL}}
\renewcommand{\Pr}{\mathrm{Prob}}
\newcommand{\vol}{\mathrm{vol}}
\newcommand{\sfP}{\mathsf{P}}
\newcommand{\spn}{\mathrm{span}}
\begin{document}

\title{An integer parallelotope with small surface area}\thanks{A.N.~was supported by NSF grant DMS-2054875, BSF grant  201822, and a Simons Investigator award. O.R.~was supported by NSF grant CCF-1320188 and a Simons Investigator award.}

\author{Assaf Naor}
\address{Mathematics Department\\ Princeton University\\ Fine Hall, Washington Road, Princeton, NJ 08544-1000, USA}
\email{naor@math.princeton.edu}

\author{Oded Regev}
\address{Department of Computer Science\\ Courant Institute of Mathematical Sciences, New York University\\ 251 Mercer Street, New York, NY 10012, USA}
\email{regev@cims.nyu.edu}

\maketitle

\begin{abstract} We prove that for any $n\in \N$ there is a convex body $K\subset \R^n$ whose surface area is at most $n^{\frac12+o(1)}$, yet the translates of $K$ by the integer lattice $\Z^n$ tile $\R^n$.

\end{abstract}

\section{Introduction}

Given $n\in \N$ and a lattice $\Lambda\subset \R^n$, a convex body $K\subset \R^n$ is called a $\Lambda$-parallelotope (e.g.,~\cite{Eng93})  if the translates of $K$ by elements of $\Lambda$ tile $\R^n$, i.e., $\R^n=\Lambda+K=\bigcup_{x\in \Lambda} (x+K)$, and the interior of $(x+K)\cap (y+K)$ is empty for every distinct $x,y\in \Lambda$. One calls $K$ a parallelotope (parallelogon if $n=2$ and parallelohedron if $n=3$; some of the literature calls a parallelotope in $\R^n$ and $n$-dimensional parallellohedron; e.g.,~\cite{Ale05,Dol12}) if it is a $\Lambda$-parallelotope for some lattice $\Lambda\subset \R^n$. We call a $\Z^n$-parallelotope an integer parallelotope.

The hypercube $[-\frac12,\frac12]^n$ is an integer parallelotope whose surface area equals $2n$. By~\cite[Corollary~A.2]{FKO07}, for every $n\in \N$ there exists an integer parallelotope $K\subset \R^n$ whose surface area is smaller than $2n$ by a universal constant factor. Specifically, the surface area of the integer parallelotope $K$ that was considered in~\cite{FKO07} satisfies $\vol_{n-1}(\partial K)\le \sigma (n+O(n^{2/3}))$, where  $\sigma=2\sum_{s=1}^\infty (s/e)^s/(s^{3/2}s!)\le 1.23721$. To the best of our knowledge, this is the previously best known upper bound on the smallest possible surface area of an integer parallelotope. The main result of the present work is the following theorem:

\begin{theorem}\label{thm:main} For every $n\in \N$ there exists an integer parallelotope whose surface area is $n^{\frac12+o(1)}$.
\end{theorem}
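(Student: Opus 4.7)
A natural approach is to take $K$ as an affine image of the Voronoi cell of a nearly spherical lattice. Let $\Lambda \subset \R^n$ be a lattice with $\det(\Lambda)=1$ whose Euclidean Voronoi cell $V_\Lambda$ has small surface area $\vol_{n-1}(\partial V_\Lambda) \le n^{1/2+o(1)}$; such nearly spherical lattices exist by classical probabilistic arguments of Rogers and Conway--Thompson. Since $\Lambda$ is unimodular there exists $T \in \GL_n(\R)$ with $|\det T|=1$ and $T\Lambda = \Z^n$, and then $K := T V_\Lambda$ is automatically a convex $\Z^n$-parallelotope of volume one. The linear change-of-variables formula for surface measure yields
\[
\vol_{n-1}(\partial K) \;=\; \int_{\partial V_\Lambda} \|T^{-T}\nu(y)\|\,\dd S(y),
\]
where $\nu(y)$ denotes the outward unit normal to $V_\Lambda$ at $y$. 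The plan is to choose $\Lambda$, together with a basis of $\Lambda$ (which pins down $T$ up to the action of $\GL_n(\Z)$), so that this integral is $n^{1/2+o(1)}$.

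The central obstacle is a dimensional tension. Since the shortest vector of $\Lambda$ has length of order $\sqrt n$ (by near-sphericity) while $\Z^n$ has shortest vector $1$, every column of $T^{-1}$ is a nonzero vector of $\Lambda$ and hence has length $\ge \sqrt n$, so $\|T^{-1}\|_F \ge n$. It follows that $\E_\nu\|T^{-T}\nu\|^2 = \|T^{-T}\|_F^2/n \ge n$ when $\nu$ is uniform on $S^{n-1}$, and concentration arguments on the sphere show that typically $\E_\nu\|T^{-T}\nu\|$ is itself of order $\sqrt n$. Combined with $\vol_{n-1}(\partial V_\Lambda) \sim \sqrt n$, this would only give $\vol_{n-1}(\partial K) \gtrsim n$, no improvement over the cube. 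To beat this bound one must exploit that the outward normal $\nu(y)$ along $\partial V_\Lambda$ is \emph{not} actually uniform on $S^{n-1}$: it is supported on the directions of Voronoi-relevant vectors of $\Lambda$, weighted by the Euclidean areas of the corresponding facets.

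The main technical challenge is therefore to arrange for this facet-normal measure on the sphere to be anti-correlated with the high-stretch singular directions of $T^{-T}$; equivalently, $V_\Lambda$ should be thin in precisely those directions in which $T^{-T}$ is large. I expect this to require a careful marriage of (i) a probabilistic construction of $\Lambda$ (for example a random unimodular lattice of the Conway--Thompson type) that guarantees many facet normals spread over the sphere, with (ii) a careful choice of basis of $\Lambda$ that aligns the few ``long'' columns of $T^{-1}$ with directions in which $V_\Lambda$ carries little facet area. Should the pure Voronoi-cell construction prove too rigid, an alternative is to take $K$ to be a zonotope $\sum_{i=1}^N [-c_i/2,\,c_i/2]\,v_i$ generated by integer vectors $v_i$ forming a totally unimodular system, for which the surface area can be read off directly from the $v_i$ and the $c_i$. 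Either way, reconciling near-isoperimetric surface area with the rigid $\Z^n$-tiling constraint is the heart of the proof.
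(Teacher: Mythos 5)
There is a genuine gap: your argument stops exactly at the point where the real work begins. After correctly computing that for any $T$ with $T\Lambda=\Z^n$ the columns of $T^{-1}$ are nonzero vectors of $\Lambda$, hence $\|T^{-1}\|_F\gtrsim n$ and the naive estimate gives $\vol_{n-1}(\partial(TV_\Lambda))\gtrsim n$ (no better than the cube), you propose to beat this by making the facet-area-weighted normal measure of $V_\Lambda$ anti-correlated with the large singular directions of $T^{-T}$ --- but you give no construction, no estimate, and no mechanism by which such anti-correlation could be achieved. Once $\Lambda$ is fixed, the only remaining freedom is composing $T$ with an element of $\GL_n(\Z)$, a discrete group, and for a Rogers/Conway--Thompson type lattice the Voronoi cell has exponentially many facets whose normals are essentially spread over the whole sphere, so it is far from clear that any choice of basis can bring $\int_{\partial V_\Lambda}\|T^{-T}\nu(y)\|\ud S(y)$ down from order $n$ to $n^{\frac12+o(1)}$. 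The zonotope alternative is likewise only named, not carried out. What you have is a plan together with an honest identification of its central obstruction, not a proof.

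For contrast, the paper avoids any global linear image of a well-rounded lattice. It proves the stronger statement that every full-rank sublattice $\Lambda\subset\Z^n$ admits a $\Lambda$-parallelotope $K$ with $\vol_{n-1}(\partial K)/\vol_n(K)\le \sqrt{n}\,e^{O(\sqrt{\log n})}$, by induction on $n$. The inductive step picks an integer matrix $\sfB\in\M_{m\times n}(\Z)$ of rank $m$, constructed probabilistically in the spirit of LDPC codes (columns are independent length-$d$ random walks on the hypercube), such that any $s\gtrsim m^2/n$ columns are linearly independent and $\|\sfB\|_{\ell_2^n\to\ell_2^m}\lesssim\sqrt{n/m}$. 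Then $V^\perp=\mathrm{Ker}(\sfB)$ intersects $\Lambda$ in a rank-$(n-m)$ lattice with no nonzero vector of length $\le\sqrt{s}$, so its Voronoi cell inside $V^\perp$ has surface-to-volume ratio at most $2(n-m)/\sqrt{s}$ by the inradius lemma, with no linear distortion at all; the projection of $\Lambda$ onto $V$ becomes, after applying $\sfB$, a full-rank sublattice of $\Z^m$, which is handled by the inductive hypothesis at a multiplicative cost $\|\sfB\|_{\ell_2^n\to\ell_2^m}$; the tile is the orthogonal product of the two pieces, and optimizing $m=n e^{-\Theta(\sqrt{\log n})}$ yields the bound. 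The idea your proposal is missing is precisely this: rather than distorting a good lattice onto $\Z^n$ (which inevitably stretches by $\sqrt{n}$ in many directions), one locates inside $\Z^n$ itself a high-dimensional sublattice with large packing radius --- the kernel of a sparse integer matrix --- where the undistorted Voronoi cell already does the job, and pays a controlled distortion only on the much smaller complementary factor, recursively.
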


Because the covolume of $\Z^n$ is $1$, the volume of any integer parallelotope $K\subset \R^n$ satisfies $\vol_n(K)=1$. Consequently, by the isoperimetric inequality we have\footnote{We use the following conventions for asymptotic notation, in addition to the usual $O(\cdot),o(\cdot),\Omega(\cdot),\Theta(\cdot)$ notation. For $a,b>0$, by writing
$a\lesssim b$ or $b\gtrsim a$ we mean that $a\le Cb$ for a
universal constant $C>0$, and $a\asymp b$
stands for $(a\lesssim b) \wedge  (b\lesssim a)$. If we need to allow for dependence on parameters, we indicate it by subscripts. For example, in the presence of an auxiliary parameter $\e$, the notation $a\lesssim_\e b$ means that $a\le C(\e)b$, where $C(\e)>0$ may depend only on $\e$, and analogously for  $a\gtrsim_\e b$ and $a\asymp_\e b$.}
\begin{equation}\label{eq:use isoperimetric}
\vol_{n-1}(\partial K)\ge \frac{\vol_{n-1}(S^{n-1})}{\vol_n(B^n)^{\frac{n-1}{n}}} \asymp \sqrt{n},
\end{equation}
where $B^n\eqdef \{(x_1,\ldots,x_n)\in \R^n:\ x_1^2+\cdots+x_n^2\le 1\}$ denotes the Euclidean ball and $S^{n-1}\eqdef \partial B^n$.

Thanks to~\eqref{eq:use isoperimetric}, Theorem~\ref{thm:main} is optimal up to the implicit lower order factor. It remains open to determine whether this lower-order factor could be removed altogether, namely to answer the following question:

\begin{question}\label{Q:sharp} For every $n\in \N$, does there exist  an integer parallelotope $K\subset \R^n$ with $\vol_{n-1}(\partial K)\asymp \sqrt{n}$?
\end{question}

Question~\ref{Q:sharp} goes back to~\cite{Kel94}, though such early investigations were (naturally, from the perspective of crystallography) focused on $n=3$ and asked for the exact value of the smallest possible surface area of a parallelohedron; see Conjecture~7.5 in~\cite{Bez06} and the  historical discussion in the paragraph that precedes it. The corresponding question about precisely determining the minimum  perimeter when  $n=2$ was answered in~\cite{Cho89} (its solution for general parallelogons rather than integer parallelogons is due to~\cite{Fej43}; see also~\cite{Hal01}, which treats tiles that need not be convex). Finding the exact minimum when $n=3$ remains open; we will not review the substantial literature on this topic, referring instead to the monograph~\cite{TW08} (see also~\cite{Lan22} for an exact solution of a different isoperimetric-type question for parallelohedra).

The higher dimensional asymptotic nature of Question~\ref{Q:sharp} differs from the search for exact minimizers in lower dimensions on which the literature has focused, but it is a natural outgrowth of it and it stands to reason that it was considered by researchers who worked on this topic over the past  centuries. Nevertheless, we do not know of a published source that mentions Question~\ref{Q:sharp}  prior to the more recent interest in this topic that arose due to its connection  to theoretical computer science that was found in~\cite{FKO07} and were pursued in~\cite{Raz11,KORW08,AK09,KROW12,BM21}; specifically, Question~\ref{Q:sharp} appears in~\cite[Section~6]{BM21}.


In~\cite{KORW08} it was proved that Question~\ref{Q:sharp} has a positive answer if one drops the requirement that the tiling set is convex, i.e., by~\cite[Theorem~1.1]{KORW08} for every $n\in \N$ there is a compact set $\Omega\subset \R^n$ such that $\R^n=\Z^n+\Omega$,  the interior of $(x+\Omega)\cap (y+\Omega)$ is empty for every distinct $x,y\in \Z^n$, and $\vol_{n-1}(\partial \Omega)\lesssim \sqrt{n}$; see also the proof of this result that was found in~\cite{AK09}. The lack of convexity of $\Omega$ is irrelevant for the applications to computational complexity that were found in~\cite{FKO07}. The proofs in~\cite{KORW08,AK09} produce a set $\Omega$ that is decidedly non-convex. Our proof of Theorem~\ref{thm:main}  proceeds via an entirely different route and provides a paralletotope whose surface area comes close to the  guarantee of~\cite{KORW08} (prior to~\cite{KORW08}, the best known upper bound on the smallest possible surface area of a compact $\Z^n$-tiling set was the aforementioned $1.23721n$ of~\cite{FKO07}).

While it could be tempting to view the existence of the aforementioned compact set $\Omega$ as evidence for the availability of an integer parallelotope with comparable surface area, this is a tenuous hope because the convexity requirement from a parallelotope imposes severe restrictions. In particular, by~\cite{Min97} for every $n\in \N$ there are only finitely many combinatorial types of parallelotopes in $\R^n$.\footnote{Thus, just for the sake concreteness (not important for the present purposes): Since antiquity it was known that there are $2$ types of parallelogons; by~\cite{Fed53} there are 5  types of parallelohedra; by~\cite{Del29,Sht73} there are 52 types of $4$-dimensional parallelotopes.} In fact, by combining~\cite[Section~6]{Dol09} with~\cite{Min97,Ven54} we see that $K\subset \R^n$ is a parallelotope if and only if $K$ is a centrally symmetric polytope, all of the $(n-1)$-dimensional faces of $K$ are centrally symmetric, and the orthogonal projection of $K$ along any of its $(n-2)$-dimensional faces is either a parallelogram or a centrally symmetric hexagon.

Of course, Theorem~\ref{thm:main} {\em must} produce such a constrained polytope. To understand how this is achieved, it is first important to stress that this becomes a  straightforward task if one only asks for a parallelotope with small surface area rather than for an {\em integer} parallelotope with small surface area. Namely, it follows easily from the literature that for every $n\in \N$ there exist a rank $n$ lattice $\Lambda\subset \R^n$ whose  covolume is $1$ and a $\Lambda$-parallelotope $K\subset \R^n$ that satisfies $\vol_{n-1}(\partial K)\lesssim \sqrt{n}$. Indeed, by~\cite{Rog50} there is a rank $n$ lattice $\Lambda\subset \R^n$ of covolume $1$ whose packing radius is at least $c\sqrt{n}$, where $c>0$ is a universal constant. Let $K$ be the Voronoi cell of $\Lambda$, namely $K$ consists of the points in $\R^n$ whose (Euclidean) distance to any point of $\Lambda$ is not less than their distance to the origin. Then, $K$ is a $\Lambda$-parallelotope, $\vol_n(K)=1$ since the covolume of $\Lambda$ is $1$, and $K\supseteq c\sqrt{n}B^n$ since the packing radius of $\Lambda$ is at least $c\sqrt{n}$. Consequently,  the surface area of $K$ is at most $c^{-1}\sqrt{n}$ by the following simple lemma that we will use multiple times in the proof of Theorem~\ref{thm:main}:

\begin{lemma}\label{lem:inradius} Fix $n\in \N$ and $R>0$. Suppose that a convex body $K\subset \R^n$ satisfies $K\supseteq RB^n$. Then,
$$
\frac{\vol_{n-1}(\partial K)}{\vol_n(K)}\le \frac{n}{R}.
$$
\end{lemma}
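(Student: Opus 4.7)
The plan is to deduce the lemma from Minkowski's definition of surface area, combined with the elementary observation that a convex body $K$ containing $RB^n$ grows at most like a homothety when expanded by a small Euclidean ball. Concretely, recall that
\[
\vol_{n-1}(\partial K)=\lim_{\e\to 0^+}\frac{\vol_n(K+\e B^n)-\vol_n(K)}{\e}.
\]
So, it suffices to bound $\vol_n(K+\e B^n)$ from above in terms of $\vol_n(K)$.

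The key step is to show that the hypothesis $K\supseteq RB^n$ implies the inclusion
\[
K+\e B^n\subseteq \Bigl(1+\frac{\e}{R}\Bigr)K
\]
for every $\e>0$. Indeed, since $RB^n\subseteq K$ we have $\e B^n\subseteq (\e/R)K$, and therefore
\[
K+\e B^n\subseteq K+\frac{\e}{R}K=\Bigl(1+\frac{\e}{R}\Bigr)K,
\]
where the last equality is just the convexity of $K$ (as $\alpha K+\beta K=(\alpha+\beta)K$ for $\alpha,\beta\geq 0$ whenever $K$ is convex).

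Substituting the inclusion into the Minkowski formula and using $\vol_n((1+\e/R)K)=(1+\e/R)^n\vol_n(K)$,
\[
\vol_{n-1}(\partial K)\le \lim_{\e\to 0^+}\frac{\bigl((1+\e/R)^n-1\bigr)\vol_n(K)}{\e}=\frac{n}{R}\vol_n(K),
\]
which rearranges to the desired inequality. There is no real obstacle: once one spots the homothety inclusion $K+\e B^n\subseteq (1+\e/R)K$, the proof is immediate. (Alternatively, one could integrate the support function against surface measure to write $\vol_n(K)=\frac{1}{n}\int_{\partial K}\langle x,\nu(x)\rangle\,\dd\sigma(x)\ge \frac{R}{n}\vol_{n-1}(\partial K)$, since $K\supseteq RB^n$ forces every supporting hyperplane of $K$ to be at distance at least $R$ from the origin; this yields the same bound.)
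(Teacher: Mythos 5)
Your proof is correct and follows essentially the same route as the paper: the inclusion $K+\e B^n\subseteq (1+\e/R)K$ obtained from $RB^n\subseteq K$ together with convexity, plugged into the Minkowski surface-area limit. The alternative support-function argument you mention in passing is also valid but not needed.
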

\noindent Lemma~\ref{lem:inradius} is known (e.g.,~\cite[Lemma~2.1]{GKV18}); for completeness we will present  its short proof in Section~\ref{sec:proof}.

Even though the packing radius of $\Z^n$ is small, the above observation drives our inductive proof of Theorem~\ref{thm:main}, which proceeds along the following lines. Fix $m\in \{1,\ldots, n-1\}$ and let $V$ be an $m$-dimensional subspace of $\R^n$. If  the lattice $V^\perp \cap \Z^n$ has rank $n-m$ and its packing radius is large, then Lemma~\ref{lem:inradius} yields a meaningful upper bound on the $(n-m-1)$-dimensional  volume of the boundary of the Voronoi cell of $V^\perp \cap \Z^n$. We could then consider the lattice $\Lambda\subset V$ which is the orthogonal projection of $\Z^n$ onto $V$, and inductively obtain a $\Lambda$-parallelotope (residing within $V$) for which the $(m-1)$-dimensional volume of its boundary is small. By considering the product (with respect to the identification of $\R^n$ with $V^\perp\times V$) of the two convex bodies thus obtained, we could hope to get the desired integer parallelotope.

There are obvious obstructions to this plan. The subspace $V$ must be chosen so that the lattice $V^\perp \cap \Z^n$ is sufficiently rich yet it contains no short nonzero vectors. Furthermore, the orthogonal projection $\Lambda$ of $\Z^n$ onto $V$ is not $\Z^m$, so we must assume a stronger inductive hypothesis and also apply a suitable ``correction'' to $\Lambda$ so as to be able to continue the induction. It turns out that there is tension between how large the packing radius of $V^\perp \cap \Z^n$   could be, the loss that we incur due to the aforementioned correction, and the total cost of iteratively applying the procedure that we sketched above. Upon balancing these constraints, we will see that the best choice for the dimension $m$ of $V$ is $m=n\exp(-\Theta(\sqrt{\log n}))$. The rest of the ensuing text will present the details of the implementation of this strategy.

\section{Proof of Theorem~\ref{thm:main}}\label{sec:proof}

Below, for each $n\in \N$ the normed space $\ell_2^n=(\R^n,\|\cdot\|_{\ell_2^n})$ will denote the standard Euclidean space, i.e.,
$$
\forall x=(x_1,\ldots,x_n)\in \R^n,\qquad \|x\|_{\ell_2^n}\eqdef \sqrt{x_1^2+\cdots+x_n^2}.
$$
The standard scalar product of $x,y\in \R^n$ will be denoted $\langle x,y\rangle \eqdef x_1y_1+\cdots+x_ny_n$. The coordinate basis of $\R^n$ will be denoted $e_1,\ldots,e_n$, i.e., for each $i\in \n$ the $i$th entry of $e_i$ is $1$ and the rest of the coordinates of $e_i$ vanish. We will denote the origin of $\R^n$ by $\0=(0,\ldots,0)$.  For $0<s\le n$, the $s$-dimensional Hausdorff measure on $\R^n$ that is induced by the $\ell_2^n$ metric will be denoted by $\vol_s(\cdot)$. In particular, if $K\subset \R^n$ is a convex body (compact and with nonempty interior), then the following identity holds (see, e.g.,~\cite{KR97}):
\begin{equation}\label{eq:surface formula limit}
\vol_{n-1}(\partial K)=\lim_{\d\to 0^+} \frac{\vol_n(K+\d B^n)-\vol_n(K)}{\d}.
\end{equation}

If $V$ is a subspace of $\R^n$, then its orthogonal complement (with respect to the $\ell_2^n$ Euclidean structure) will be denoted $V^\perp$ and the orthogonal projection from $\R^n$ onto $V$ will be denoted $\proj_V$. When treating a subset $\Omega$ of $V$ we will slightly abuse notation/terminology by letting  $\partial \Omega$ be the boundary of $\Omega$ within $V$, and similarly when we will discuss the interior of $\Omega$ we will mean its interior within $V$. This convention results in suitable interpretations of when $K\subset V$ is a convex body or a parallelohedron (with respect to a lattice of $V$). The variant of~\eqref{eq:surface formula limit} for a convex body $K\subset V$ becomes
\begin{equation}\label{eq:area of boundary in subspace}
\vol_{\dim(V)-1}(\partial K)=\lim_{\d\to 0^+} \frac{\vol_{\dim(V)}\big(K+\d (V\cap B^n)\big)-\vol_{\dim(V)}(K)}{\d}.
\end{equation}

\begin{proof}[Proof of Lemma~\ref{lem:inradius}] Since $K\supseteq R B^n$, for every $\d>0$ we have
\begin{equation}\label{eq:use convexity}
K+\d B^n\subset K+\frac{\d}{R} K=\Big(1+\frac{\d}{R}\Big) \Big( \frac{R}{R+\d}K+\frac{\d}{R+\d}K\Big) = \Big(1+\frac{\d}{R}\Big)K,
\end{equation}
where the last step of~\eqref{eq:use convexity} uses the fact that $K$ is convex. Consequently,
\begin{equation*}
\vol_{n-1}(\partial K)\stackrel{\eqref{eq:surface formula limit}}{=}\lim_{\d\to 0^+} \frac{\vol_n(K+\d B^n)-\vol_n(K)}{\d}\stackrel{\eqref{eq:use convexity}}{\le} \lim_{\d\to 0^+} \frac{\big(1+\frac{\d}{R}\big)^n-1}{\d}\vol_n(K)=\frac{n}{R}\vol_n(K).\tag*{\qedhere}
\end{equation*}
\end{proof}

The sequence $\{Q(n)\}_{n=1}^\infty$ that we introduce in the following definition will play an important role in the ensuing reasoning: 

\begin{notation}\label{notation:Q} For each $n\in \N$ let $Q(n)$ be the infimum over those $Q\ge 0$ such that for every lattice $\Lambda\subset \Z^n$ of rank $n$ there exists a $\Lambda$-parallelotope $K\subset \R^n$ that satisfies
\begin{equation}\label{eq:def Q}
\frac{\vol_{n-1}(\partial K)}{\vol_n(K)}\le Q.
\end{equation}
\end{notation}

As $\vol_n(K)=1$ for any integer parallelotope $K\subset \R^n$, Theorem~\ref{thm:main} is a special case of the following result: 

\begin{theorem}\label{thm:main Q version} There exists a universal constant $C\ge 1$ such that $Q(n)\lesssim \sqrt{n} e^{C\sqrt{\log n}}$ for every $n\in \N$ .
\end{theorem}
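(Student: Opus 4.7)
I would prove Theorem~\ref{thm:main Q version} by strong induction on $n$, leveraging the fact that Notation~\ref{notation:Q} quantifies $Q(n)$ uniformly over \emph{all} rank-$n$ sublattices $\Lambda\subset \Z^n$, not merely $\Z^n$ itself; this is the ``stronger inductive hypothesis'' alluded to in the introduction, and is indispensable because the induction naturally produces non-standard intermediate sublattices of $\Z^m$. Given $\Lambda\subset\Z^n$ of rank $n$, the construction splits $\R^n=V^\perp\oplus V$ for a rational $m$-dimensional subspace $V$ (with $m$ chosen below) and forms the Cartesian product $K=K_1+K_2$, where $K_1\subset V^\perp$ is a parallelotope for $L\eqdef V^\perp\cap\Lambda$ (rank $n-m$) and $K_2\subset V$ is a parallelotope for $\Lambda_V\eqdef\proj_V(\Lambda)$ (rank $m$). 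A short coset-chasing verification shows that $K$ is a $\Lambda$-parallelotope even though $L\oplus\Lambda_V$ is generally a different lattice: to reduce $(a,b)\in V^\perp\oplus V$ modulo $\Lambda$, first find the unique $\lambda_V\in\Lambda_V$ with $b-\lambda_V\in K_2$, lift $\lambda_V$ to some $\tilde\lambda\in\Lambda$, and then exploit the remaining freedom $\tilde\lambda\mapsto\tilde\lambda+\ell$ with $\ell\in L$ to place $a-\proj_{V^\perp}(\tilde\lambda)-\ell$ inside $K_1$.

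Because $K$ is a Cartesian product, $\vol_n(K)=\vol_{n-m}(K_1)\vol_m(K_2)$, $\partial K=(\partial K_1\times K_2)\cup(K_1\times\partial K_2)$, and the surface-to-volume ratio decomposes as the sum
\[
\frac{\vol_{n-1}(\partial K)}{\vol_n(K)}=\frac{\vol_{n-m-1}(\partial K_1)}{\vol_{n-m}(K_1)}+\frac{\vol_{m-1}(\partial K_2)}{\vol_m(K_2)}.
\]
I take $K_1$ to be the Voronoi cell of $L$ in $V^\perp$, so it contains a Euclidean ball of radius equal to the packing radius $R$ of $L$, and the $V^\perp$-version of Lemma~\ref{lem:inradius} bounds the first summand by $(n-m)/R$. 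For the second summand, since $\Lambda_V$ is not a sublattice of any canonical $\Z^m$, a ``correction'' step is required: choose a linear isomorphism $\Phi\colon V\to\R^m$ sending $\Lambda_V$ into $\Z^m$ (for instance by mapping a Minkowski-reduced basis of $\Lambda_V$ to $e_1,\ldots,e_m$), invoke the inductive bound $Q(m)$ to obtain a $\Phi(\Lambda_V)$-parallelotope in $\R^m$, and pull it back via $\Phi^{-1}$ to a $\Lambda_V$-parallelotope $K_2\subset V$. The distortion of $\Phi$ enters as a multiplicative correction factor $\kappa=\kappa(\Lambda_V)$ on the resulting surface-to-volume ratio, controlled by the lengths of the reduced basis of $\Lambda_V$.

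The subspace $V$ must be chosen so that simultaneously the packing radius $R$ of $V^\perp\cap\Lambda$ is large and $\Lambda_V$ admits a sufficiently isotropic reduced basis; existence of such a $V$ should follow from a Minkowski--Hlawka- or Siegel-type averaging argument over rational $m$-dimensional subspaces. The recursion then takes the rough form
\[
Q(n)\lesssim \frac{n-m}{R}+\kappa\cdot Q(m),
\]
and the principal obstacle is the quantitative tension highlighted in the introduction: driving $R$ upward forces $\det(\Lambda_V)=\det(\Lambda)/\det(L)$ downward, which generically inflates $\kappa$. The technical heart of the argument is a geometric lemma producing a subspace $V$ for which $R\gtrsim\sqrt{n-m}$ and $\kappa\le e^{O(1)}$ hold simultaneously. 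Iterating the recursion with $m=n\exp(-\Theta(\sqrt{\log n}))$ then produces $\asymp\sqrt{\log n}$ levels, each contributing a multiplicative factor of $e^{O(1)}$ on top of the $\sqrt{n}$ emerging from the innermost call, which matches the target bound $Q(n)\lesssim\sqrt{n}\,e^{C\sqrt{\log n}}$.
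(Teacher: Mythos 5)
You have reconstructed the paper's skeleton accurately: strong induction over all full-rank sublattices of $\Z^n$, the product $K=K_1+K_2$ with $K_1$ the Voronoi cell of $V^\perp\cap\Lambda$ controlled through Lemma~\ref{lem:inradius}, the additive ratio decomposition and the operator-norm distortion bound of Lemma~\ref{lem:properties of ratio}, and the coset-chasing tiling verification. But the step you defer --- a ``geometric lemma producing a subspace $V$ for which $R\gtrsim\sqrt{n-m}$ and $\kappa\le e^{O(1)}$ hold simultaneously,'' asserted to ``follow from a Minkowski--Hlawka- or Siegel-type averaging argument'' --- is precisely the content of the theorem, and it is neither proved nor plausibly obtainable by that route. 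With your normalization, where $\Phi$ sends a reduced basis of $\Lambda_V=\proj_V\Lambda$ to $e_1,\ldots,e_m$, the norm entering Lemma~\ref{lem:properties of ratio} is $\kappa=\|\Phi\|$, and since every nonzero vector of $\Lambda_V$ must map to a nonzero integer vector, $\kappa\ge 1/\lambda_1(\Lambda_V)$; as $\det(\Lambda_V)=\det(\Lambda)/\det(V^\perp\cap\Lambda)$, any choice of $V$ for which $V^\perp\cap\Z^n$ has large covolume --- the typical price of a large packing radius --- makes $\Lambda_V$ extremely dense, with very short vectors, and $\kappa$ blows up. This simultaneous control is exactly the tension the introduction warns about, and no averaging statement over rational subspaces is known (or offered by you) that resolves it. Note also that your targets are strictly stronger than what the paper proves: there $R\gtrsim\sqrt{s}\asymp m/\sqrt{n}=\sqrt{n}\,e^{-\Theta(\sqrt{\log n})}$, not $\sqrt{n-m}$, and the factor multiplying $Q(m)$ is $\lesssim\sqrt{n/m}$, which within that scheme cannot be improved (see the discussion preceding Question~\ref{Q:norm of integer matrix}). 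If your lemma held at every scale with your choice of $m$, unrolling the recursion would in fact give $Q(n)\lesssim\sqrt{n}$ and would essentially settle the open Question~\ref{Q:sharp}, so it certainly cannot be taken on faith.

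What actually closes the induction in the paper is a different mechanism, entirely absent from your proposal. The subspace $V$ is not generic: it is the row space of an explicitly constructed matrix $\sfB\in\M_{m\times n}(\Z)$, and the ``correction'' is $\sfB$ restricted to $V$, so that $\sfB\proj_V\Lambda$ automatically lands \emph{inside} $\Z^m$ (this is also why $Q$ must be defined over all rank-$m$ sublattices of $\Z^m$; in your scheme, where $\Phi(\Lambda_V)=\Z^m$ on the nose, the strengthened hypothesis you invoke would never actually be used --- a sign the normalization is off). The packing-radius bound $R\ge\sqrt{s}/2$ then follows from the purely combinatorial requirement that any $s$ columns of $\sfB$ be linearly independent, since a nonzero integer vector in $\ker\sfB$ must have more than $s$ nonzero coordinates, and the distortion is simply $\|\sfB\|_{\ell_2^n\to\ell_2^m}$. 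Producing $\sfB$ with $s\gtrsim m^2/n$ and $\|\sfB\|_{\ell_2^n\to\ell_2^m}\lesssim\sqrt{n/m}$ is the technical heart of the paper (Lemma~\ref{lem:the matrix that we need}, via Observation~\ref{obs:linear algebra}, Lemma~\ref{lem:LDPC} and Lemma~\ref{lem:random walk}: an LDPC-type random matrix whose columns are length-$d$ random walks on $\{0,1\}^m$, analyzed through a return-probability estimate), and nothing in your argument substitutes for it. The surrounding bookkeeping --- the recursion $Q(n)\lesssim n^{3/2}/m+Q(m)\sqrt{n/m}$, the choice $m=n\exp(-\Theta(\sqrt{\log n}))$, and the resulting $e^{C\sqrt{\log n}}$ loss --- you have essentially right, but without the matrix construction (or a genuine proof of your subspace-selection lemma) the proof does not go through.
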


The following key lemma is the inductive step in the ensuing proof of Theorem~\ref{thm:main Q version} by induction on $n$:

\begin{lemma}\label{lem:inductive step} Fix $m,n,s\in \N$ with $s\le m\le n$. Suppose that $\sfB\in \M_{m\times n}(\Z)$ is an $m$-by-$n$ matrix all of whose entries are integers such that $\sfB$ has rank $m$ and any $s$ of the columns of $\sfB$ are linearly independent. Then,
\begin{equation*}\label{eq:recursive inequality}
Q(n)\le \frac{2(n-m)}{\sqrt{s}}+Q(m)\|\sfB\|_{\ell_2^n\to \ell_2^m},
\end{equation*}
where  $\|\cdot\|_{\ell_2^n\to \ell_2^m}$ denotes the operator norm from $\ell_2^n$ to $\ell_2^m$.
\end{lemma}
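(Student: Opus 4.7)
My plan is to build a $\Lambda$-parallelotope for an arbitrary rank-$n$ sublattice $\Lambda\subseteq\Z^n$ as an orthogonal product of two lower-dimensional pieces: one inside $V^\perp\eqdef\ker\sfB$ (of dimension $n-m$), controlled via the linear-independence hypothesis on the columns of $\sfB$, and one inside $V\eqdef(\ker\sfB)^\perp$ (of dimension $m$), obtained from the inductive quantity $Q(m)$ after transferring through $\sfB|_V$. The projection $\proj_V$ exhibits $\Lambda$ as an extension $0\to\Lambda_1\to\Lambda\to\Lambda_2\to 0$, where $\Lambda_1\eqdef\Lambda\cap V^\perp$ is a lattice of rank $n-m$ in $V^\perp$ (since $V^\perp$ is rational for $\Z^n$ and $\Lambda$ has finite index in $\Z^n$) and $\Lambda_2\eqdef\proj_V(\Lambda)$ is a lattice of rank $m$ in $V$.

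For the first factor, the hypothesis that any $s$ columns of $\sfB$ are linearly independent forces every nonzero $z\in\Lambda_1\subseteq\Z^n\cap\ker\sfB$ to have at least $s+1$ nonzero integer entries, hence $\|z\|_{\ell_2^n}\ge\sqrt{s+1}$. Let $K_1\subseteq V^\perp$ be the Voronoi cell of $\Lambda_1$; this is a $\Lambda_1$-parallelotope that contains the Euclidean ball of radius $\sqrt{s+1}/2$ inside $V^\perp$, so Lemma~\ref{lem:inradius} applied within $V^\perp$ gives $\vol_{n-m-1}(\partial K_1)/\vol_{n-m}(K_1)\le 2(n-m)/\sqrt{s}$. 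For the second factor, observe that $\sfB(\Lambda)$ is a rank-$m$ sublattice of $\Z^m$, so by the definition of $Q(m)$, for every $\e>0$ there is a $\sfB(\Lambda)$-parallelotope $K_2'\subseteq\R^m$ with ratio at most $Q(m)+\e$. With $T\eqdef\sfB|_V$ (an isomorphism onto $\R^m$), set $K_2\eqdef T^{-1}(K_2')\subseteq V$; since $T(\Lambda_2)=\sfB(\Lambda)$, this is a $\Lambda_2$-parallelotope. The inclusion $K_2+\d(V\cap B^n)\subseteq T^{-1}(K_2'+\d\|T\|B^m)$, combined with $\vol_m(K_2)=|\det T|^{-1}\vol_m(K_2')$ and the limit formula~\eqref{eq:area of boundary in subspace}, then yields $\vol_{m-1}(\partial K_2)/\vol_m(K_2)\le\|T\|(Q(m)+\e)\le\|\sfB\|_{\ell_2^n\to\ell_2^m}(Q(m)+\e)$.

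Finally, define $K\eqdef K_1+K_2\subseteq\R^n$; because $V^\perp\perp V$ this Minkowski sum is a genuine orthogonal product, hence convex, and Fubini gives $\vol_n(K)=\vol_{n-m}(K_1)\vol_m(K_2)$ and $\vol_{n-1}(\partial K)=\vol_{n-m-1}(\partial K_1)\vol_m(K_2)+\vol_{n-m}(K_1)\vol_{m-1}(\partial K_2)$, so the ratio for $K$ is exactly the \emph{sum} of the two ratios bounded above; letting $\e\to 0$ then gives the claimed inequality, \emph{provided} $K$ tiles $\R^n$ under $\Lambda$. Verifying this tiling is the main obstacle I expect, because a priori $K$ only obviously tiles under the direct-sum lattice $\Lambda_1\oplus\Lambda_2$ (viewed in $V^\perp\oplus V=\R^n$), which in general differs from $\Lambda$ even though the two share a covolume, since $\Lambda_2\subseteq V$ is generally not contained in $\Lambda$. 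I would verify the $\Lambda$-tiling directly: given $x\in\R^n$, first use the $\Lambda_2$-tiling of $V$ by $K_2$ to write $\proj_V(x)=\lambda+k_2$ uniquely with $\lambda\in\Lambda_2$ and $k_2\in K_2$, then pick any lift $z_\lambda\in\Lambda$ satisfying $\proj_V(z_\lambda)=\lambda$ and use the $\Lambda_1$-tiling of $V^\perp$ by $K_1$ to write $\proj_{V^\perp}(x)-\proj_{V^\perp}(z_\lambda)=\mu_1+k_1$ uniquely with $\mu_1\in\Lambda_1$ and $k_1\in K_1$, yielding $x=(z_\lambda+\mu_1)+(k_1+k_2)$ as a unique (almost everywhere) decomposition with $z_\lambda+\mu_1\in\Lambda$ and $k_1+k_2\in K$.
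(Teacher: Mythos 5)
Your proposal is correct and follows essentially the same route as the paper's proof: the same splitting $\R^n=V^\perp\oplus V$ with $V^\perp=\mathrm{Ker}(\sfB)$, the Voronoi cell of $\Lambda\cap V^\perp$ (with the column-independence hypothesis giving the $\sqrt{s}$-inradius bound via Lemma~\ref{lem:inradius}), the application of $Q(m)$ to the rank-$m$ sublattice $\sfB(\Lambda)=\sfB\proj_V\Lambda\subset\Z^m$ pulled back through $(\sfB|_V)^{-1}$, and the additivity of the surface-to-volume ratio for the orthogonal product $K_1+K_2$. The only differences are cosmetic (the $\e$-approximation in invoking $Q(m)$, which is in fact slightly more careful than the paper, and your combined covering-plus-a.e.-uniqueness tiling check in place of the paper's separate covering and interior-disjointness verifications).
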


The fact that Theorem~\ref{thm:main Q version} treats any sublattice of $\Z^n$ of full rank (recall how $Q(n)$ is defined), even though in Theorem~\ref{thm:main} we are interested  only in $\Z^n$ itself, provides a strengthening of the inductive hypothesis that makes it possible for our proof of Lemma~\ref{lem:inductive step} to go through. If $\Lambda$ is an arbitrary full rank sublattice of $\Z^n$, then a $\Lambda$-parallelotope $K\subset \R^n$ need no longer satisfy $\vol_n(K)=1$, so the inductive hypothesis must incorporate the value of $\vol_n(K)$, which is the reason why we consider the quantity $\vol_{n-1}(\partial K)/\vol_n(K)$ in~\eqref{eq:def Q}. Observe that this quantity is not scale-invariant, so it might seem somewhat unnatural to study it, but it is well-suited to the aforementioned induction thanks to the following simple lemma:

\begin{lemma}\label{lem:properties of ratio} Fix $m,n\in \N$ and  an $m$-dimensional subspace $V$ of $\R^n$. Let $O\subset V^\perp$ be an open subset of  $V^\perp$ and let $G\subset V$ be an open subset of $V$. Then, for $\Omega= O+G$ we have
\begin{equation}\label{eq:additivity}
\frac{\vol_{n-1}(\partial \Omega)}{\vol_n(\Omega)}=\frac{\vol_{n-m-1}(\partial O)}{\vol_{n-m}(O)}+\frac{\vol_{m-1}(\partial G)}{\vol_m(G)}.
\end{equation}
Furthermore, if $T:\R^m\to V$ is a linear isomorphism and $K\subset \R^m$ is a convex body, then 
\begin{equation}\label{eq:cheeger ratio for linear image}
\frac{\vol_{m-1}(\partial TK)}{\vol_{m}(TK)}\le  \frac{\vol_{m-1}(\partial K)}{\vol_{m}(K)}\|T^{-1}\|_{(V,\|\cdot\|_{\ell_2^n})\to \ell_2^m},
\end{equation}
where $\|\cdot\|_{(V,\|\cdot\|_{\ell_2^n})\to \ell_2^m}$ is the operator norm  from $V$, equipped with the norm inherited from $\ell_2^n$, to $\ell_2^m$.
\end{lemma}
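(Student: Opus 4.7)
The plan is to verify the two identities by direct computation, relying on the Minkowski-content formulas~\eqref{eq:surface formula limit} and~\eqref{eq:area of boundary in subspace} together with the orthogonal product structure $\R^n=V^\perp\oplus V$ (in the applications of the lemma, $O$ and $G$ will be interiors of convex bodies, so these formulas apply).

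For~\eqref{eq:additivity}, I would identify $\R^n$ with $V^\perp\times V$ via the orthogonal decomposition, so that $\Omega=O+G$ becomes the Cartesian product $O\times G$. Fubini immediately yields $\vol_n(\Omega)=\vol_{n-m}(O)\,\vol_m(G)$. The heart of the matter is the product rule
$$
\vol_{n-1}(\partial\Omega)\;=\;\vol_{n-m-1}(\partial O)\,\vol_m(G)+\vol_{n-m}(O)\,\vol_{m-1}(\partial G),
$$
which I would establish directly from~\eqref{eq:surface formula limit} by sandwiching $\Omega+\d B^n$ between the union
$$
\big(O+\d(V^\perp\cap B^n)\big)\times G\ \cup\ O\times\big(G+\d(V\cap B^n)\big)
$$
from below and the product $(O+\d(V^\perp\cap B^n))\times(G+\d(V\cap B^n))$ from above. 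Applying Fubini to each factor, together with inclusion--exclusion for the union (whose two pieces intersect in exactly $\Omega$), then dividing by $\d$ and letting $\d\to 0^+$ using~\eqref{eq:area of boundary in subspace} in each factor separately, both the lower and upper bounds converge to the same limit, which matches the right-hand side of the displayed product rule. Dividing through by $\vol_n(\Omega)=\vol_{n-m}(O)\vol_m(G)$ then yields~\eqref{eq:additivity}.

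For~\eqref{eq:cheeger ratio for linear image}, set $L=\|T^{-1}\|_{(V,\|\cdot\|_{\ell_2^n})\to\ell_2^m}$. Unpacking the definition of this operator norm gives $T^{-1}(V\cap B^n)\subseteq L\cdot B^m$, equivalently $V\cap B^n\subseteq T(L\cdot B^m)$. Hence for every $\d>0$,
$$
TK+\d(V\cap B^n)\ \subseteq\ TK+T(\d L\cdot B^m)\ =\ T(K+\d L\cdot B^m).
$$
Applying $\vol_m$, using the change-of-variables identity $\vol_m(T(\cdot))=|\det T|\cdot\vol_m(\cdot)$ (reading $T$ as a linear isomorphism between Euclidean spaces of equal dimension, via any orthonormal basis of $V$), subtracting $\vol_m(TK)=|\det T|\vol_m(K)$, dividing by $\d$, and letting $\d\to 0^+$ using~\eqref{eq:area of boundary in subspace} on the left and~\eqref{eq:surface formula limit} on the right, one obtains $\vol_{m-1}(\partial TK)\le |\det T|\cdot L\cdot \vol_{m-1}(\partial K)$. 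Dividing through by $\vol_m(TK)=|\det T|\vol_m(K)$ then gives~\eqref{eq:cheeger ratio for linear image}.

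The only point that calls for some care is the surface-area product rule in the first part, since $O$ and $G$ are only assumed to be open; the sandwich above sidesteps any general geometric-measure-theoretic machinery and reduces the claim to a routine limit. Everything else amounts to volume scaling for linear maps combined with the Minkowski definition of surface area.
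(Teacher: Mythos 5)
Your proof of \eqref{eq:cheeger ratio for linear image} is essentially identical to the paper's: the inclusion $TK+\d(V\cap B^n)\subseteq T(K+\d L B^m)$, determinant scaling, and the limit via \eqref{eq:area of boundary in subspace} and \eqref{eq:surface formula limit} is exactly the argument given there. For \eqref{eq:additivity} you take a genuinely different (and correct) route. The paper never looks at the $\d$-enlargement of $\Omega$: it just records $\vol_n(\Omega)=\vol_{n-m}(O)\vol_m(G)$ and that, up to a $\vol_{n-1}$-null set, $\partial\Omega=(\partial O+G)\cup(O+\partial G)$, so the product structure of Hausdorff measure yields $\vol_{n-1}(\partial\Omega)=\vol_{n-m-1}(\partial O)\vol_m(G)+\vol_{n-m}(O)\vol_{m-1}(\partial G)$ in one line. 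Your sandwich of $\Omega+\d B^n$ between the union and the full product, with inclusion--exclusion (the two pieces indeed intersect exactly in $\Omega$), computes the Minkowski content of $\Omega$ and produces the same right-hand side; the cross term in the upper bound vanishes because $\vol_m\big(G+\d(V\cap B^n)\big)\to\vol_m(G)$. What your route buys is that it avoids invoking any product formula for Hausdorff measures of boundaries; what it costs is that you must know that Minkowski content equals surface area for $O$, for $G$, \emph{and} for $\Omega$ itself, i.e., you need the regularity you flag (in the convex case this is fine since $\overline{O}+\overline{G}$ is again a convex body, so \eqref{eq:surface formula limit} applies to $\Omega$), whereas the lemma is nominally stated for arbitrary open sets. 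Since the lemma is only ever applied to (interiors of) convex bodies, and the paper's own remark concedes that some boundary regularity is implicitly assumed in any case, this restriction is harmless.
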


\begin{proof} For~\eqref{eq:additivity}, note that since $O\perp G$ we have $\vol_n(\Omega)=\vol_{n-m}(O)\vol_m(G)$, and   $\partial \Omega=(\partial O+G)\cup (O+\partial G)$ where $\vol_{n-1} ((\partial O+G)\cap (O+\partial G))=0$, so $\vol_{n-1}(\partial \Omega)=\vol_{n-m-1}(\partial O)\vol_m(G)+ \vol_{n-m}(O)\vol_{m-1}(\partial G)$.

For~\eqref{eq:cheeger ratio for linear image}, denote $\rho= \|T^{-1}\|_{(V,\|\cdot\|_{\ell_2^n})\to \ell_2^m}$,  so that $T^{-1} (V\cap B^n)\subset \rho B^m$. Consequently, 
$$
\forall \d\in \R,\qquad TK+\d(V\cap B^n)=T\big(K+\d T^{-1}(V\cap B^n)\big)\subset T(K+\d\rho B^m).
$$
By combining this inclusion with~\eqref{eq:area of boundary in subspace}, we see that
\begin{multline*}
\vol_{m-1} (\partial TK)\le \lim_{\d\to 0^+}\frac{\vol_m\big(T(K+\d\rho B^m)\big)-\vol_m(TK)}{\d}\\\le\det(T)\lim_{\d\to 0^+}\frac{\vol_m(K+\d\rho B^m)-\vol_m(K)}{\d}\stackrel{\eqref{eq:surface formula limit}}{=}\det(T) \vol_{m-1}(\partial K)\rho=\frac{\vol_m(TK)}{\vol_m(K)} \vol_{m-1}(\partial K)\rho.\tag*{\qedhere}
\end{multline*}
\end{proof}

\begin{remark} We stated Lemma~\ref{lem:properties of ratio} with $K$ being a convex body since that is all that we need herein. However, the proof does not rely on its convexity  in an essential way; all that is needed is that $K$ is a body in $\R^m$ whose boundary is sufficiently regular so that the identity~\eqref{eq:surface formula limit} holds (with $n$ replaced by $m$).
\end{remark}

Any matrix   $\sfB$ as in Lemma~\ref{lem:inductive step} must have a row with at least $n/m$ nonzero entries. Indeed, otherwise the total number of nonzero entries of $\sfB$ would be less than $m(n/m)=n$, so at least one of the $n$ columns  $\sfB$ would have to vanish, in contradiction to the assumed linear independence (as $s\ge 1$). Thus, there exists $j\in \m$ such that at least $\lceil n/m\rceil$ of the entries of $\sfB^*e_j\in \R^n$ do not vanish. Those entries are integers, so $\|\sfB^*e_j\|_{\ell_2^n}\ge \sqrt{\lceil n/m\rceil}$. Hence, the quantity $\|\sfB\|_{\ell_2^n\to \ell_2^m}=\|\sfB^*\|_{\ell_2^m\to \ell_2^n}$ in~\eqref{eq:recursive inequality} cannot be less than $\sqrt{\lceil n/m\rceil}$.

\begin{question}\label{Q:norm of integer matrix} Given $m,n\in \N$ and $C>1$, what is the order of magnitude of the largest $s=s(m,n,C)\in \N$ for which  there exists $\sfB\in \M_{m\times n}(\Z)$ such that any $s$ of the columns of $\sfB$ are linearly independent and
$$
\|\sfB\|_{\ell_2^n\to \ell_2^m}\le  C\sqrt{\frac{n}{m}}.
$$
\end{question}

The following lemma is a step towards Question~\ref{Q:norm of integer matrix}  that we will use in the implementation of Lemma~\ref{lem:inductive step}:

\begin{lemma}\label{lem:the matrix that we need}Suppose that $m,n\in \N$ satisfy $4\le m\le n$ and $n\ge (m\log m)/4$. There exist $s\in \N$ with $s\gtrsim m^2/n$ and $\sfB\in \M_{m\times n}(\Z)$ of rank $m$ such that any $s$ of the columns of $\sfB$ are linearly independent and
\begin{equation*}
\|\sfB\|_{\ell_2^n\to \ell_2^m}\lesssim\sqrt{\frac{n}{m}}.
\end{equation*}
\end{lemma}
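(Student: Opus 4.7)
My plan is to construct $\sfB$ using the probabilistic method. Let $\sfB\in\{-1,0,1\}^{m\times n}$ be the random integer matrix whose columns $B_{1},\ldots,B_{n}$ are independent, each $B_{j}$ having support a uniformly random $d$-element subset $S_{j}\subseteq\m$ with independent uniform $\pm 1$ signs on $S_{j}$; here $d$ is a large absolute constant (say, $d=10$) and the target value is $s=\lceil cm^{2}/n\rceil$ for a small absolute constant $c>0$. I will argue that with positive probability $\sfB$ simultaneously (i) satisfies $\|\sfB\|_{\ell_{2}^{n}\to\ell_{2}^{m}}\lesssim\sqrt{n/m}$, (ii) has rank $m$, and (iii) has every subset of at most $s$ of its columns linearly independent; the integer matrix thus obtained is what the lemma demands.

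The core step is (iii), which I would deduce from the $(s,1/3)$-expansion of the bipartite graph defined by the supports: every $S\subseteq\n$ with $|S|\le s$ satisfies $|N(S)|\ge(2d/3)|S|$, where $N(S)=\bigcup_{j\in S}S_{j}\subseteq\m$. A standard double-counting then produces a unique neighbor---a row lying in the support of exactly one column of $S$---which forces the coefficient of that column to vanish in any alleged linear relation; iterating on $S$ minus that column yields linear independence of the entire set $S$. To verify the expansion I would union-bound
\begin{equation*}
\Pr\bigl[(s,1/3)\text{-expansion fails}\bigr]\le\sum_{s'=1}^{s}\binom{n}{s'}\binom{m}{\lfloor 2ds'/3\rfloor}\Bigl(\frac{2ds'/3}{m}\Bigr)^{ds'}.
\end{equation*}
Using $\binom{a}{b}\le(ea/b)^{b}$, each summand is bounded by $\exp\bigl(s'[\log(n/s')+(d/3)\log(ds'/m)+O_{d}(1)]\bigr)$, and the bracket is an increasing function of $s'$ for $d>3$; plugging in $s'=s=cm^{2}/n$ yields $(2-d/3)\log(n/m)+(d/3-1)\log c+O_{d}(1)$, which (using $n\ge m$) is a large negative number for $d\ge 10$ and $c$ sufficiently small. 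Hence the sum is a convergent geometric series bounded by any prescribed small constant.

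For (i), matrix Bernstein applied to $\sfB\sfB^{*}=\sum_{j=1}^{n}B_{j}B_{j}^{*}$, whose expectation is $(dn/m)\Id_{m}$ (the signs average out off the diagonal), gives $\|\sfB\sfB^{*}\|_{\ell_{2}^{m}\to\ell_{2}^{m}}\lesssim_{d}n/m$ with probability at least $3/4$ precisely under the hypothesis $n\ge(m\log m)/4$; the logarithmic correction coming from the sparsity of $\sfB$ is what fixes this particular threshold. For (ii), the simplest route is to appeal to the classical fact that the biadjacency matrix of a random $d$-regular bipartite graph with $d\ge 3$ and $n\ge m$ has rank $m$ with probability bounded away from $0$; an alternative is to strengthen the concentration in (i) to a lower bound $\lambda_{\min}(\sfB\sfB^{*})>0$. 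A union bound over the three failure events then produces a realization of $\sfB$ meeting all requirements. I expect the most delicate point to be (ii): the standard matrix concentration arguments only barely deliver rank $m$ at the borderline $n=(m\log m)/4$, so either a sharper singular-value lower bound or a small modification of the construction (e.g., concatenating a carefully chosen deterministic block of rank $m$ that does not disturb the expansion analysis) may be needed to squeeze out the lemma's conclusion uniformly in the stated range.
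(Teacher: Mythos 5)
Your random sparse $\pm 1$ construction is genuinely different from the paper's, and two of its three ingredients are sound: the unique-neighbor/expansion argument for $s$-wise column independence goes through (the union bound you write is correct, the bracket is indeed increasing in $s'$ for $d>3$, and at $s'=s$ it is at most $(2-\frac{d}{3})\log\frac{n}{m}+(\frac{d}{3}-1)\log c+O_d(1)$, which is very negative for $d\ge 10$ and $c$ small since $n\ge m$), and matrix Bernstein does give the upper bound $\|\sfB\sfB^*\|\lesssim_d n/m$ with an absolute constant once $n\ge(m\log m)/4$. The genuine gap is your item (ii), and neither of your first two fallbacks closes it. For (a), your matrix is not the biadjacency matrix of a $d$-regular bipartite graph (the row degrees are random), and there is no ``classical fact'' one can cite giving full row rank over $\R$ for such sparse rectangular sign matrices down at $n\asymp m\log m$; this cannot be waved in. For (b), a quantitative check shows the $\lambda_{\min}$ route provably falls short at the stated threshold: with $\|B_jB_j^*\|\le d$ and $\big\|\sum_j\E\big[(B_jB_j^*-\frac{d}{m}\Id)^2\big]\big\|\le d^2n/m$, keeping $\lambda_{\min}(\sfB\sfB^*)>0$ requires controlling a deviation of size $t=dn/m$, and the Bernstein exponent is then $\asymp n/m$ with the factors of $d$ cancelling, so one needs $n\gtrsim m\log m$ with a constant near $8/3$ --- an order of magnitude above the hypothesis $n\ge(m\log m)/4$, confirming your own suspicion. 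Your option (c) is the right idea but is not carried out, and ``concatenating a block'' as literally described changes the dimensions. The clean repair is deterministic post-processing, exactly Observation~\ref{obs:linear algebra} of the paper: keep a maximal linearly independent set of $r$ rows of the random matrix and replace the remaining $m-r$ rows by distinct standard basis vectors completing a basis of $\R^n$; this preserves the property that any $s$ columns are independent (the discarded rows were combinations of the retained ones) and increases the squared operator norm by at most $1$. With that substitution your proof is complete; note also one small bookkeeping point: plugging $s'=s=cm^2/n$ into the union bound presumes $cm^2/n\ge1$, and in the complementary regime $m^2/n\lesssim 1$ the lemma is trivial with $s=1$ since every column is nonzero, so treat that case separately.

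For comparison, the paper reaches Lemma~\ref{lem:the matrix that we need} through Lemma~\ref{lem:LDPC}: the columns are independent length-$d$ random walks on $\{0,1\}^m$, $s$-wise independence is proved over $\GF(2)$ by a first-moment count of vanishing subset sums using the return-probability estimate of Lemma~\ref{lem:random walk} (independence over $\GF(2)$ then implies independence over $\R$), the operator norm is bounded by the elementary $\ell_1$ row/column estimate $\|\sfA\|_{\ell_2^n\to\ell_2^m}\le(\max_i\sum_j|\sfA_{ij}|)^{1/2}(\max_j\sum_i|\sfA_{ij}|)^{1/2}$ after a Chernoff bound on row sparsity (this is where $n\gtrsim(m\log m)/d$ enters), and the rank is fixed by Observation~\ref{obs:linear algebra}. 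Your route buys a field-independent combinatorial certificate (unique-neighbor expansion) at the cost of heavier machinery for the norm; the paper's first-moment argument is lighter and, by letting $d$ grow, yields the stronger range $s\gtrsim_\e m(m/n)^{\e}$ mentioned after the lemma, which your expansion bound would also give if you let $d$ depend on $\e$ rather than fixing $d=10$.
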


Lemma~\ref{lem:the matrix that we need} suffices for our purposes, but it is not sharp. We will actually prove below that in the setting of Lemma~\ref{lem:the matrix that we need} for every $0<\e\le 1$ there exist $s\in \N$ with $s\gtrsim m^{1+\e}/n^\e=m(m/n)^\e\ge m^2/n$ and $\sfB\in \M_{m\times n}(\Z)$ of rank $m$ such that any $s$ of the columns of $\sfB$ are linearly independent and $\|\sfB\|_{\ell_2^n\to \ell_2^m}\lesssim_\e \sqrt{n/m}$.

While Question~\ref{Q:norm of integer matrix} arises naturally from Lemma~\ref{lem:inductive step}  and it is interesting in its own right, fully answering Question~\ref{Q:norm of integer matrix}  will not lead to removing the $o(1)$ term in Theorem~\ref{thm:main} altogether; the bottleneck in the ensuing reasoning that precludes obtaining such an answer to Question~\ref{Q:sharp} (if true) is elsewhere.

 \begin{proof}[Proof of Theorem~\ref{thm:main Q version} assuming Lemma~\ref{lem:inductive step}  and Lemma~\ref{lem:the matrix that we need}]  We will proceed by induction on $n$. In preparations for the base of the induction, we will first record the following estimate (which is sharp when the lattice is $\Z^n$). The Voronoi cell of a rank $n$ sublattice $\Lambda$ of $\Z^n$, namely the set
 $$K=\big\{x\in \R^n: \forall y\in \Lambda, \ \|x\|_{\ell_2^n}\le \|x-y\|_{\ell_2^n}\big\},
  $$
  is a $\Lambda$-parallelotope that satisfies $K\supseteq \frac12 B^n$. Indeed, if $y\in \Lambda\setminus \{0\}$, then $\|y\|_{\ell_2^n}\ge 1$ since $y\in \Z^n\setminus \{0\}$. Hence,
  $$\forall x\in \frac12 B^n, \qquad  \|x-y\|_{\ell_2^n}\ge \|y\|_{\ell_2^n}-\|x\|_{\ell_2^n}\ge \|x\|_{\ell_2^n}.$$
By Lemma~\ref{lem:inradius}, it follows that $\vol_{n-1}(\partial K)/\vol_n(K)\le 2n$. This gives the (weak) a priori bound $Q(n)\le 2n$.

Fix $n\in \N$ and suppose that there exists $m\in \N$ satisfying $4\le m\le n$ and $n\ge (m\log m)/4$. By using Lemma~\ref{lem:inductive step}  with the matrix $\sfB$ from Lemma~\ref{lem:the matrix that we need} we see that there is a universal constant $\kappa\ge 4$  for which
\begin{equation}\label{eq:Q recursive}
Q(n)\le \kappa \left(\frac{n^{\frac32}}{m}+Q(m)\sqrt{\frac{n}{m}}\right).
\end{equation}
We will prove by induction on $n\in \N$ the following upper bound on $Q(n)$, thus proving Theorem~\ref{thm:main Q version}:
\begin{equation}\label{eq:the bound with kappa}
 Q(n)\le 4\kappa \sqrt{n} e^{\sqrt{2(\log n)\log (2\kappa)}}.
\end{equation}

If $n\le 4\kappa^2$, then by the above discussion $Q(n)\le 2n\le 4\kappa\sqrt{n}$, so that~\eqref{eq:the bound with kappa} holds. If $n>4\kappa^2$, then define
\begin{equation}\label{eq:choose m}
 m\eqdef  \left\lfloor ne^{-\sqrt{2(\log n)\log (2\kappa)}}\right\rfloor.
\end{equation}
It is straightforward to verify that this choice of $m$ satisfies $4\le m< n$ and $n\ge (m\log m)/4$ (with room to spare). Therefore~\eqref{eq:Q recursive} holds. Using the induction hypothesis, it follows that
\begin{align}\label{eq:substitute our m}
\begin{split}
Q(m)\sqrt{\frac{n}{m}}\le 4\kappa \sqrt{n} e^{\sqrt{2(\log m)\log (2\kappa)}}&\stackrel{\eqref{eq:choose m}}{\le} 4\kappa \sqrt{n} e^{\sqrt{2\left(\log n-\sqrt{2(\log n)\log (2\kappa)}\right)\log (2\kappa)}}\\&\le 4\kappa\sqrt{n} e^{\left(\sqrt{2\log n}-\sqrt{\log (2\kappa)}\right)\sqrt{\log (2\kappa)}}=2\sqrt{n} e^{\sqrt{2(\log n)\log (2\kappa)}},
\end{split}
\end{align}
where the penultimate step of~\eqref{eq:substitute our m} uses the  inequality $\sqrt{a-b}\le \sqrt{a}-b/(2\sqrt{a})$, which holds for  every $a,b\in \R$ with $a\ge b$; in our setting $a=\log n$ and $b=\sqrt{2(\log n)\log (2\kappa)}$ and $a>b$ because we are now treating the case $n>4\kappa^2$. A substitution of~\eqref{eq:substitute our m} into~\eqref{eq:Q recursive},  while using  that $m\ge \frac12 n\exp\left(-\sqrt{2(\log n)\log (2\kappa)}\right)$ holds thanks to~\eqref{eq:choose m}, gives~\eqref{eq:the bound with kappa}, thus completing the proof of Theorem~\ref{thm:main Q version}.
\end{proof}

We will next prove Lemma~\ref{lem:inductive step}, which is the key recursive step that underlies Theorem~\ref{thm:main}.

\begin{proof}[Proof of Lemma~\ref{lem:inductive step}]  We will start with the following two elementary observations to facilitate the ensuing proof. Denote the span of the rows of $\sfB$ by $V=\sfB^*\R^m\subset \R^n$ and notice that $\dim(V)=m$ as $\sfB$ is assumed to have rank $m$. Suppose that $\Lambda$ is a lattice of rank $n$ that is contained in $\Z^n$.
Firstly, we claim that the rank of the lattice $V^\perp \cap \Lambda$ equals $n-m$.
Indeed, we can write $V^\perp \cap \Lambda=\sfC(\Z^n\cap \sfC^{-1}V^\perp)$ where $\sfC$ is an invertible matrix with integer entries, i.e., $
\sfC \in \M_n(\Z)\cap \GL_n(\Q)$, such that $\Lambda =\sfC\Z^n$.
Furthermore, $V^\perp=\mathrm{Ker}(\sfB)$, so the dimension over $\Q$ of $\Q^n\cap V^\perp$ equals $n-m$. As $\sfC^{-1}\in \GL_n(\Q)$, it follows that $\sfC^{-1}V^\perp$ contains $n-m$ linearly independent elements of $\Z^n$.
Secondly, we claim that the orthogonal projection $\proj_V\Lambda$ of $\Lambda$ onto $V$ is a discrete subset of $V$, and hence is a lattice; its rank will then be  $\dim(V)=m$ because we are assuming that $\spn(\Lambda)=\R^n$, so $\spn(\proj_V\Lambda)=\proj_V(\spn(\Lambda))=\proj_V(\R^n)=V$. We need to check that for any  $\{x_1,x_2,\ldots\}\subset \Lambda$ such that $\lim_{i\to\infty} \proj_V x_i=\0$ there is $i_0\in \N$ such that $\proj_V x_i=\0$ whenever $i\in \{i_0,i_0+1,\ldots\}$. Indeed, as $V^\perp=\mathrm{Ker}(\sfB)$ we have $\sfB x=\sfB \proj_V x$ for every $x\in \R^n$, so $\lim_{i\to \infty}\sfB x_i=\0$. But, $\sfB x_i\in \Z^m$ for every $i\in \N$ because $\sfB\in \M_{m\times n}(\Z)$ and $x_i\in \Lambda\subset \Z^n$. Consequently, there is $i_0\in \N$ such that $\sfB x_i=\0$ for every $i\in \{i_0,i_0+1,\ldots\}$, i.e., $x_i\in \mathrm{Ker}(\sfB)=V^\perp$ and hence $\proj_V x_i=\0$.

Let $K_1\subset V^\perp$ be the Voronoi cell of $V^\perp \cap \Lambda$, namely
$
K_1=\{x\in V^\perp:\ \forall y\in V^\perp \cap \Lambda,\quad  \|x\|_{\ell_2^n}\le \|x-y\|_{\ell_2^n}\}$. 
If $y=(y_1,\ldots,y_n)\in V^\perp=\mathrm{Ker}(\sfB)$, then $y_1\sfB e_1+\cdots+y_n\sfB e_n=\0$. By the assumption on $\sfB$, this implies that if also $y\neq \0$, then $|\{i\in \n:\ y_i\neq 0\}|>s$. Consequently, as the entries of elements of $\Lambda$ are integers,
 $$
\forall y\in (V^\perp \cap \Lambda)\setminus \{0\}, \qquad\|y\|_{\ell_2^n}> \sqrt{s}.
$$
Hence, if $x\in \frac{\sqrt{s}}{2}(V^\perp \cap B^n)$, then
$$
\forall y\in (V^\perp \cap \Lambda)\setminus \{0\},\qquad \|x-y\|_{\ell_2^n}\ge \|y\|_{\ell_2^n}-\|x\|_{\ell_2^n}>\sqrt{s}-\frac{\sqrt{s}}{2}=\frac{\sqrt{s}}{2}\ge \|x\|_{\ell_2^n}.
$$
This means that $K_1\supseteq \frac{\sqrt{s}}{2} (V^\perp \cap B^n)$, and therefore by Lemma~\ref{lem:inradius}  we have
\begin{equation}\label{eq:K1 ratio}
\frac{\vol_{n-m-1}(\partial K_1)}{\vol_{n-m}(K_1)}\le \frac{n-m}{\frac12 \sqrt{s}}= \frac{2(n-m)}{\sqrt{s}}.
\end{equation}

Next, fix  $i\in \m$. By the definition of $V$,  the $i$'th row $\sfB^* e_i$ of $\sfB$ belongs to $V$, so
\begin{equation}\label{eq:dual rows in V}
\forall (x,i)\in \R^n\times \m,\qquad \langle x, \sfB^* e_i\rangle = \langle \proj_V x, \sfB^* e_i\rangle.
\end{equation}
Since all of the entries of $\sfB$  are integers, it follows that
$$
\forall (x,i)\in \Z^n\times \m,\qquad \langle \sfB \proj_V x,e_i\rangle= \langle  \proj_V x,\sfB^*e_i\rangle\stackrel{\eqref{eq:dual rows in V}}{=} \langle  x,\sfB^*e_i\rangle\in \Z.
$$
In other words, $\sfB\proj_V\Z^n\subset \Z^m$, and hence the lattice $\sfB\proj_V\Lambda$ is a subset of $\Z^m$.  Furthermore, $\sfB$ is injective on $V$ because $\mathrm{Ker}(\sfB)=V^\perp$, so $\sfB\proj_V\Z^n$ is a rank $m$ sublattice of $\Z^m$. By the definition of $Q(m)$, it follows that there exists a $\sfB\proj_V\Lambda$-parallelotope $K_2^0\subset \R^m$ such that
\begin{equation}\label{eq:use Qm}
\frac{\vol_{m-1}(\partial K_2^0)}{\vol_m(K_2^0)}\le Q(m).
\end{equation}

Because $V^\perp=\mathrm{Ker}(\sfB)$ and the rank of $\sfB$ is $m=\dim(V)$, the restriction $\sfB|_V$ of $\sfB$ to $V$ is an isomorphism between $V$ and $\R^m$. Letting $T:\R^m\to V$  denote the inverse of $\sfB|_V$, define
$
K_2= TK_2^0.
$
By combining (the second part of) Lemma~\ref{lem:properties of ratio} with~\eqref{eq:use Qm}, we see that 
\begin{equation}\label{eq:ratio for K2}
\frac{\vol_{m-1}(\partial K_2)}{\vol_m(K_2)}\le Q(m)\|\sfB\|_{\ell_2^n\to \ell_2^m}.
\end{equation}

Let $K= K_1+K_2\subset \R^n$. By combining (the first part of) Lemma~\ref{lem:properties of ratio} with~\eqref{eq:K1 ratio} and~\eqref{eq:ratio for K2}, we have
$$
\frac{\vol_{n-1}(\partial K)}{\vol_n(K)}\le  \frac{2(n-m)}{\sqrt{s}}+Q(m)\|\sfB\|_{\ell_2^n\to \ell_2^m}.
$$
Hence, the proof of Lemma~\ref{lem:inductive step} will be  complete if we check that $K$ is a $\Lambda$-parallelotope. Our construction ensures by design that this is so, as $K_1$ is a $(V^\perp \cap \Lambda)$-parallelotope and $K_2$ is a $\proj_V\Lambda$-parallelotope; verifying this fact is merely an unravelling of the definitions, which we will next perform for completeness.

Fix $z\in \R^n$. As $\R^m=\sfB\proj_V\Lambda+K_2^0$, there is $x\in \Lambda$ with $\sfB\proj_V z\in \sfB\proj_V x+K_2^0$. Apply $T$ to this inclusion and use that $T\sfB|_V$ is the identity mapping to get  $\proj_V z\in \proj_V x+K_2$. Next, $V^\perp=K_1+ V^\perp \cap \Lambda$ since $K_1$ is the Voronoi cell of $V^\perp \cap \Lambda$, so there is $y\in V^\perp \cap \Lambda$ such that $\proj_{V^\perp} z-\proj_{V^\perp} x\in y+K_1$. Consequently, $
z=\proj_{V^\perp} z+\proj_Vz\in \proj_{V^\perp} x+y+K_1+\proj_V x+K_2= x+y+K\in \Lambda+K$. Hence, $\Lambda+K=\R^n$.

It remains to check that for every $w\in \Lambda\setminus \{0\}$ the interior of $K$ does not intersect  $w+K$. Indeed, by the definition of $K$,  if $k$ belongs to the interior of $K$, then $k=k_1+k_2$, where $k_1$ belongs to the interior of $K_1$ and $k_2$ belongs to the interior of $K_2$. Since $\sfB$ is injective on $K_2\subset V$, it follows that $\sfB k_2$ belongs to the interior of $\sfB K_2=K_2^0$. If $\proj_{V}w\neq 0$, then $\sfB \proj_Vw\in \sfB\proj_V\Lambda\setminus \{0\}$, so because $K_2^0$ is a $\sfB\proj_V\Lambda$-parallelotope, $\sfB k_2\notin \sfB \proj_Vw +K_2^0$. By applying $T$ to is inclusion, we see that $k_2\notin \proj_Vw+ K_2$, which implies that $k\notin w+K$. On the other hand, if $\proj_{V}w=0$, then $w\in (V^\perp\cap \Lambda)\setminus \{0\}$. Since $K_1$ is a $V^\perp \cap \Lambda$-parallelotope, it follows that $k_1\notin w+K_1$, so $k\notin w+K$.
\end{proof}

To complete the proof of Theorem~\ref{thm:main Q version}, it remains to prove Lemma~\ref{lem:the matrix that we need}. For ease of later reference, we first record the following straightforward linear-algebraic fact:

\begin{observation}\label{obs:linear algebra} Fix $m,n,s\in \N$ with $s\le m\le n$. Suppose that there exists $\sfA\in \M_{m\times n}(\Z)$ such that any $s$ of the columns of $\sfA$ are linearly independent. Then, there also exists $\sfB\in \M_{m\times n}(\Z)$ such that any $s$ of the columns of $\sfB$ are linearly independent, $\sfB$ has rank $m$, and
\begin{equation}\label{eq:B norm}
\|\sfB\|_{\ell_2^n\to \ell_2^m} \le \sqrt{1+ \|\sfA\|_{\ell_2^n\to \ell_2^m}^2}.
\end{equation}
\end{observation}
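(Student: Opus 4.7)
The plan is to construct $\sfB$ by replacing $m-r$ of the columns of $\sfA$ with standard basis vectors of $\R^m$, rather than perturbing $\sfA$ by an integer correction in the orthogonal complement of $\mathrm{col}(\sfA)$ (which is the most natural attempt, but runs into integrality issues when $\mathrm{col}(\sfA)^\perp$ does not contain enough short integer vectors). If $r\eqdef \mathrm{rank}(\sfA)=m$ then set $\sfB=\sfA$; otherwise $r<m$, so let $W=\mathrm{col}(\sfA)\subset\R^m$. First I would pick indices $j_1,\ldots,j_{m-r}\in\{1,\ldots,m\}$ so that $\spn\{e_{j_1},\ldots,e_{j_{m-r}}\}$ is a direct-sum complement of $W$ in $\R^m$; such a coordinate complement exists by a standard greedy argument (at step $k$ the subspace $W+\spn(e_{j_1},\ldots,e_{j_{k-1}})$ has dimension $r+k-1<m$, hence some $e_j$ avoids it and we take that for $e_{j_k}$). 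Next I would pick $I\subset\{1,\ldots,n\}$ of size $|I|=n-(m-r)\ge r$ such that the submatrix $\sfA_I$ formed by the columns of $\sfA$ indexed by $I$ still has rank $r$; this can be arranged by including any $r$ linearly independent columns of $\sfA$ inside $I$. Then set
\[
\sfB\eqdef \bigl[\sfA_I \;\big|\; e_{j_1},\,e_{j_2},\,\ldots,\,e_{j_{m-r}}\bigr]\in\M_{m\times n}(\Z).
\]

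To verify the three required properties of $\sfB$, observe first that $\mathrm{col}(\sfB)=W+\spn\{e_{j_k}\}_k=\R^m$, so $\sfB$ has rank $m$. For the ``any $s$ columns linearly independent'' property, I would take any $s$ columns of $\sfB$ and decompose them as $p$ columns from $\sfA_I$ and $s-p$ columns from $\{e_{j_1},\ldots,e_{j_{m-r}}\}$; a candidate dependence relation $\sum_{q}\alpha_q a_{l_q}+\sum_{k'}\beta_{k'}e_{j_{k'}}=0$ splits into its $W$-part and its $\spn\{e_{j_k}\}$-part, and since these subspaces intersect trivially (being direct-sum complements in $\R^m$) both parts must vanish individually. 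The $W$-part then forces $\alpha_q=0$ by the hypothesis that any $s$ (and hence any $p\le s$) columns of $\sfA$ are linearly independent, while the other part forces $\beta_{k'}=0$ because the selected $e_{j_{k'}}$'s are distinct standard basis vectors.

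For the norm bound, let $E\eqdef[e_{j_1},\ldots,e_{j_{m-r}}]\in\M_{m\times(m-r)}(\Z)$, so that the block structure of $\sfB$ gives $\sfB\sfB^*=\sfA_I\sfA_I^*+EE^*$. The columns of $E$ are orthonormal, so $EE^*$ is the orthogonal projection onto $\spn\{e_{j_k}\}_k$ and in particular has operator norm $1$. Combining this with $\|\sfA_I\|_{\ell_2^{|I|}\to\ell_2^m}\le\|\sfA\|_{\ell_2^n\to\ell_2^m}$ (operator norm is monotone under passage to a column submatrix) and Weyl's inequality for Hermitian matrices, one obtains
\[
\|\sfB\|_{\ell_2^n\to\ell_2^m}^2=\lambda_{\max}(\sfB\sfB^*)\le\lambda_{\max}(\sfA_I\sfA_I^*)+\lambda_{\max}(EE^*)\le\|\sfA\|_{\ell_2^n\to\ell_2^m}^2+1,
\]
which is the desired bound. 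The only step with any subtlety is the greedy selection of $j_1,\ldots,j_{m-r}$ in the first paragraph; everything else is a routine check, and the main point of the observation is really this clean choice of the replacement columns.
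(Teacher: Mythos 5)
Your proof is correct, and it takes a transposed route compared to the paper. The paper repairs the rank deficiency on the \emph{rows}: it keeps $r=\mathrm{rank}(\sfA)$ linearly independent rows of $\sfA$ and replaces the remaining $m-r$ rows by coordinate vectors of $\R^n$ chosen (after permuting columns) so that the $m$ rows of $\sfB$ are linearly independent; then every column of $\sfA$ survives in $\sfB$ in truncated/augmented form, the norm bound is the one-line estimate $\|\sfB x\|_{\ell_2^m}^2=\sum_{i\le r}(\sfA x)_i^2+\sum_{j=r+1}^m x_j^2\le(1+\|\sfA\|_{\ell_2^n\to\ell_2^m}^2)\|x\|_{\ell_2^n}^2$, and the $s$-wise column independence is inherited from $\sfA$ via the observation that each discarded row of $\sfA$ lies in the span of the retained ones. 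You instead repair the \emph{columns}: you discard $m-r$ columns of $\sfA$ and append coordinate vectors of $\R^m$ spanning a coordinate complement of $\mathrm{col}(\sfA)$, proving independence through the direct-sum splitting $\R^m=\mathrm{col}(\sfA)\oplus\spn\{e_{j_1},\ldots,e_{j_{m-r}}\}$ and the norm bound through $\sfB\sfB^*=\sfA_I\sfA_I^*+EE^*$ (for which you do not even need Weyl: $EE^*\preceq\Id_m$ gives $\sfB\sfB^*\preceq\sfA_I\sfA_I^*+\Id_m$ directly). Both arguments are elementary, yield the identical bound $\sqrt{1+\|\sfA\|_{\ell_2^n\to\ell_2^m}^2}$, and are equally adequate for the application; the paper's version avoids throwing away any columns of $\sfA$ and needs no complement-selection step beyond completing rows to a basis by coordinate vectors, while yours localizes the independence verification into a clean intersection-is-trivial argument without having to express the discarded rows in terms of the retained ones. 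All the small points you rely on (that $|I|=n-(m-r)\ge r$ since $n\ge m$, that subsets of at most $s$ columns of $\sfA$ are independent, and that $\|\sfA_I\|\le\|\sfA\|$) check out.
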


\begin{proof} Let $r\in \m$ be the rank of $\sfA$. By permuting the rows of $\sfA$, we may assume that its first $r$ rows, namely $\sfA^*e_1,\ldots,\sfA^*e_r\in \R^n$ are linearly independent. Also, since we can complete $\sfA^*e_1,\ldots,\sfA^*e_r$ to a basis of $\R^n$ by adding $n-r$ vectors from $\{e_1,\ldots,e_n\}\subset \R^n$, by permuting the columns of $\sfA$, we may assume that the vectors $\sfA^*e_1,\ldots,\sfA^*e_r, e_{r+1},\ldots,e_{m}\in \R^n$ are linearly independent. Let $\sfB\in \M_{m\times n}(\Z)$ be the matrix whose rows are $\sfA^*e_1,\ldots,\sfA^*e_r, e_{r+1},\ldots,e_{m}$, so that $\sfB$ has rank $m$ by design. Also,
$$
\forall x\in \R^n,\qquad \|\sfB x\|_{\ell_2^m}^2=\sum_{i=1}^r (\sfA x)_i^2+\sum_{j=r+1}^{m} x_j^2\le \big(\|\sfA\|_{\ell_2^n\to \ell_2^m}^2+1\big)\|x\|_{\ell_2^n}^2.
$$
Therefore~\eqref{eq:B norm} holds. It remains to check that any $s$ of the columns of $\sfB$ are linearly independent. Indeed, fix $S\subset \n$ with $|S|=s$ and $\{\alpha_j\}_{j\in S} \subset \R$ such that $\sum_{j\in S} \alpha_j \sfB_{ij} =0$ for every $i\in \m$. In particular, $\sum_{j\in S} \alpha_j \sfA_{ij} =0$ for every $i\in \{1,\ldots,r\}$. If $k\in \{r+1,\ldots,m\}$, then since the $k$'th row of $\sfA$ is in the span of the first $r$ rows of $\sfA$, there exist $\beta_{k1},\ldots,\beta_{kr}\in \R$ such that $\sfA_{kj} = \sum_{i=1}^r \beta_{ki} \sfA_{ij}$ for every $j\in \n$. Consequently, $\sum_{j\in S} \alpha_j \sfA_{kj} =\sum_{i=1}^r \beta_{ki} \sum_{j\in S}\alpha_j\sfA_{ij}=0$.  This shows that $\sum_{j\in S} \alpha_j \sfA_{ij} =0$ for every $i\in \m$. By the assumed property of $\sfA$, this implies that $\alpha_j=0$ for every $j\in S$.
\end{proof}

The following lemma is the main existential statement that underlies our justification of Lemma~\ref{lem:the matrix that we need}:

\begin{lemma}\label{lem:LDPC} There exists a universal constant $c>0$ with the following property. Let $d,m,n\ge 3$ be integers that satisfy $d\le m\le n$ and $n\ge (m\log m)/d$. Suppose also that $s\in \N$ satisfies
\begin{equation}\label{eq:s assumption}
s\le \frac{c}{d}\left(\frac{m^d}{n^2}\right)^{\frac{1}{d-2}}.
\end{equation}
Then, there exists an $m$-by-$n$ matrix $\sfA\in M_{m\times n}(\{0,1\})$ with the following properties:
\begin{itemize}
\item Any $s$ of the columns of $\sfA$ are linearly independent over the field $\Z/(2\Z)$;
\item Every column of $\sfA$ has at most $d$ nonzero entries;
\item Every row of $\sfA$ has at most $5dn/m$ nonzero entries.
\end{itemize}
\end{lemma}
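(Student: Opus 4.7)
I will prove the lemma via a random construction in the spirit of Gallager's analysis of random low-density parity-check (LDPC) codes. Let $\sfA \in \{0,1\}^{m\times n}$ be a random matrix whose $n$ columns are sampled independently and uniformly from the set of weight-$d$ vectors in $\{0,1\}^m$. The column-sparsity property (at most $d$ ones per column) then holds deterministically, so it suffices to show the row-sparsity and linear-independence properties hold simultaneously with positive probability; I will verify each with failure probability at most $1/4$.

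\textbf{Row sparsity via Chernoff.} For each fixed $i\in\m$, the indicators $(\sfA_{ij})_{j\in\n}$ are i.i.d.\ Bernoulli$(d/m)$ random variables (by independence of the columns and by symmetry within a column), so a multiplicative Chernoff bound yields $\Pr[\sum_j \sfA_{ij}>5dn/m]\le e^{-\Omega(dn/m)}$. The hypothesis $n\ge (m\log m)/d$ forces $dn/m\ge \log m$, so a union bound over the $m$ rows controls the failure probability by $1/4$.

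\textbf{Linear independence via first-moment.} Let $q_k$ denote the probability that $k$ i.i.d.\ uniformly random weight-$d$ columns sum to $0$ modulo $2$. A union bound over subset size gives
$$
\Pr\bigl[\exists\, T\subseteq \n,\; 1\le |T|\le s,\ \textstyle\sum_{j\in T}\sfA e_j\equiv 0 \pmod{2}\bigr] \;\le\; \sum_{k=2}^s\binom{n}{k}\, q_k.
$$
To bound $q_k$, note that if $k$ weight-$d$ columns sum to $0$ then every row in the union $U\subseteq\m$ of their supports is covered an even (hence $\ge 2$) number of times, so $|U|\le kd/2$. Summing over the choice of $U$ with $|U|=u$, and using $\binom{u}{d}/\binom{m}{d}=\prod_{i=0}^{d-1}(u-i)/(m-i)\le(u/m)^d$, one obtains
$$
q_k\;\le\;\sum_{u=d}^{\lfloor kd/2\rfloor}\binom{m}{u}\frac{\binom{u}{d}^k}{\binom{m}{d}^k}\;\le\;\sum_{u=d}^{\lfloor kd/2\rfloor}\binom{m}{u}\Bigl(\frac{u}{m}\Bigr)^{dk}\;\lesssim\; kd\cdot\Bigl(\frac{ekd}{2m}\Bigr)^{kd/2},
$$
since the summand $\binom{m}{u}(u/m)^{dk}\le(em/u)^u(u/m)^{dk}$ is increasing in $u$ over the relevant range (a short calculus check of the logarithmic derivative) and is therefore dominated by its value at $u=\lfloor kd/2\rfloor$, while the number of summands is at most $kd/2$. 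Combined with $\binom{n}{k}\le(en/k)^k$, each term in the union bound becomes
$$
\binom{n}{k}\, q_k\;\lesssim\; kd\cdot\left(\frac{Cn\,d^{d/2}k^{d/2-1}}{m^{d/2}}\right)^k
$$
for an absolute constant $C>0$. Solving the inequality $Cn\,d^{d/2}k^{d/2-1}/m^{d/2}\le 1/2$ for $k$ gives $k\le(m^d/n^2)^{1/(d-2)}/(\mathrm{const}\cdot d)$, which is precisely the hypothesis \eqref{eq:s assumption} for a suitable universal $c>0$. A geometric-series bound then controls the total failure probability by $1/4$; combining the two probabilistic statements yields a matrix with all three desired properties.

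\textbf{Main obstacle.} The main care lies in the bookkeeping of constants: the factors $e^{d/2}$, $d^{d/2}$, and the polynomial prefactor $kd$ must all be absorbed compatibly into a single absolute $c$, and the bound must remain useful at the boundary $k=2$ (where $q_2=1/\binom{m}{d}$ is exact). Shrinking $c$ handles these boundary constraints uniformly over $d$. A slightly tighter alternative, should the above bound prove insufficient, is to invoke the Krawtchouk-polynomial identity $q_k=2^{-m}\binom{m}{d}^{-k}\sum_t\binom{m}{t}K_d(t)^k$ together with the subgaussian character-decay $|K_d(t)/\binom{m}{d}|\lesssim|1-2t/m|^d$ (valid when $d\ll m$, and supplemented near $t=m/2$ by the Vandermonde bound $|K_d(m/2)|\le\binom{m/2}{d/2}$), which yields the same exponent $1/(d-2)$.
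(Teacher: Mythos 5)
Your construction is correct in substance and follows the same overall strategy as the paper (a random sparse $\{0,1\}$ matrix, a Chernoff-plus-union bound for the rows, and a first-moment bound over all subsets of at most $s$ columns), but the key probabilistic estimate is obtained by a genuinely different route. The paper takes each column to be the position $W(d)$ of a $d$-step random walk on $\{0,1\}^m$, so that the mod-$2$ sum of $k$ columns is distributed exactly as $W(dk)$, and then invokes a Fourier-analytic return-probability bound $\Pr[W(t)=\0]\le 2(t/m)^{t/2}$ (Lemma~\ref{lem:random walk}, proved via Walsh eigenvalues and a Hoeffding moment estimate). You instead take columns uniform of exact weight $d$ and bound $q_k$ combinatorially: a zero mod-$2$ sum forces the union of the supports to have size at most $kd/2$, and a union bound over that set gives $q_k\lesssim kd\,(ekd/(2m))^{kd/2}$, matching the paper's $2(kd/m)^{kd/2}$ up to factors that wash out. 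Your route is more elementary (no Fourier analysis) and also gives exact column weight $d$; the paper's route gives a cleaner bound with no polynomial prefactor and an identity (the walk interpretation of $\sum_{i\in S}W_i(d)$) in place of your covering argument.

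Two pieces of bookkeeping in your write-up need the tightening you yourself flag. First, the displayed bound $\binom{n}{k}q_k\lesssim kd\,(Cnd^{d/2}k^{d/2-1}/m^{d/2})^k$ is not valid with an absolute $C$: the actual prefactor is $e(e/2)^{d/2}$, so the base is $e(e/2)^{d/2}nd^{d/2}k^{d/2-1}/m^{d/2}$. Second, with the prefactor $kd$ present, requiring the base to be at most $1/2$ does \emph{not} make the series sum below $1/4$ (it sums to $\Theta(d)$); you need the base to be at most, say, a constant times $1/d$ (the paper analogously requires its base to be below $1/7$, with no prefactor). Both repairs are routine and cost only a universal factor in $c$, because every $d$-dependent factor (including $(e/2)^{d/2}$ and the extra power of $d$) enters the resulting bound on $s$ only through an exponent $2/(d-2)$ or $d/(d-2)$, which is uniformly bounded over $d\ge 3$ --- exactly the mechanism behind the paper's choice \eqref{eq:specify c}. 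With those adjustments your argument is a complete and valid proof of the lemma.
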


The ensuing proof of Lemma~\ref{lem:LDPC} consists of probabilistic reasoning that is common in the literature on Low Density Parity Check (LDPC) codes; it essentially follows the seminal work~\cite{Gal63}. While similar considerations appeared in many places, we could not locate a reference that  states Lemma~\ref{lem:LDPC}.\footnote{The standard range of parameters that is discussed in the LDPC literature is, using the notation of Lemma~\ref{lem:LDPC}, either when $m\asymp n$, or when $s,d$ are fixed and the pertinent question becomes how large $n$ can be as $m\to \infty$; sharp bounds in the former case are due to~\cite{Gal63} and sharp bounds in the latter case are due to~\cite{LPS97,NV08}. Investigations of these issues when the parameters have  intermediate asymptotic behaviors appear in~\cite{FKO06,Fei08,AF09,DHLMNPRSV12, GKM22,HKM22}.} A peculiarity of the present work is that, for the  reason that we have seen in the above deduction of Theorem~\ref{thm:main Q version} from Lemma~\ref{lem:inductive step}  and Lemma~\ref{lem:the matrix that we need}, we need to choose a nonstandard dependence of $m$ on $n$; recall~\eqref{eq:choose m}.

In the course of the proof of Lemma~\ref{lem:LDPC} we will use the following probabilistic estimate:

\begin{lemma}\label{lem:random walk} Let $\{W(t)=(W(t,1),\ldots,W(t,m))\}_{t=0}^\infty$ be the standard random walk on the discrete hypercube $\{0,1\}^m$, starting at the origin. Thus, $W(0)=\0$ and for each $t\in \N$ the random vector  $W(t)$ is obtained from the random vector $W(t-1)$ by choosing an index $i\in \m$ uniformly at random and setting $$W(t)=\big(W(t-1,1),\ldots,W(t-1,i-1),1-W(t-1,i),W(t-1,i+1),\ldots,W(t-1,m)\big).$$
Then, $\Pr [W(t)= \0]\le 2(t/m)^{t/2}$ for every $t\in \N$.
\end{lemma}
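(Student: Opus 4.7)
The plan is to compute $\Pr[W(t)=\0]$ exactly via Fourier analysis on the group $\mathbb{F}_2^m$ and then bound the resulting moment of a Rademacher sum. First observe that the parity of the Hamming weight of $W(t)$ is $t \bmod 2$, so $\Pr[W(t)=\0]=0$ whenever $t$ is odd and the claim is trivial; from now on assume $t=2k$ is even.

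Identifying $\{0,1\}^m$ with $\mathbb{F}_2^m$, the one-step transition kernel is $P(x,y)=\frac{1}{m}\1_{y-x\in\{e_1,\ldots,e_m\}}$, which is diagonalised by the characters $\chi_S(x)=(-1)^{\sum_{i\in S}x_i}$ (for $S\subset\m$) with eigenvalues $\lambda_S=(m-2|S|)/m$, as a one-line direct computation shows. Fourier inversion on $\mathbb{F}_2^m$ then produces
\begin{equation*}
\Pr[W(t)=\0]=P^t(\0,\0)=\frac{1}{2^m}\sum_{S\subset\m}\lambda_S^{t}=\frac{1}{m^t}\,\E\left[\Big(\sum_{j=1}^m\varepsilon_j\Big)^{t}\right],
\end{equation*}
where $\varepsilon_1,\ldots,\varepsilon_m$ are independent Rademacher variables; the last step groups $S$ by cardinality and recognises the distribution of $m-2|S|$ (for uniform $S\subset\m$) as that of $\sum_j\varepsilon_j$.

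The remaining task is to bound the even moment $\E[(\sum_j\varepsilon_j)^{2k}]$, which upon expansion equals the number of tuples $(j_1,\ldots,j_{2k})\in\m^{2k}$ in which every index of $\m$ appears an even number of times. Each such tuple admits at least one perfect matching on $\{1,\ldots,2k\}$ whose matched pairs carry equal labels, so since there are $(2k-1)!!$ perfect matchings and each is consistent with at most $m^k$ label assignments (one free label per pair), the count is at most $(2k-1)!!\,m^k\le (2k)^k m^k$. Division by $m^{2k}$ yields $\Pr[W(2k)=\0]\le (2k/m)^k=(t/m)^{t/2}$, which is in fact stronger than the stated bound by a factor of $2$; the factor of $2$ in the lemma is merely a convenience that absorbs both parities of $t$ into a single clean statement. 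The only step requiring a moment's thought is the overcounting argument behind the $(2k-1)!!\,m^k$ bound, but because the direction is an inequality (tuples with repeated indices admit several compatible matchings) the estimate we need goes through without friction.
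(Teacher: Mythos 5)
Your proof is correct. The first half coincides with the paper's argument: both diagonalise the walk by the characters $\chi_S(x)=(-1)^{\sum_{i\in S}x_i}$ with eigenvalues $(m-2|S|)/m$, dispose of odd $t$ by parity, and rewrite $\Pr[W(t)=\0]=\frac{1}{2^m}\sum_{k=0}^m\binom{m}{k}\big(1-\frac{2k}{m}\big)^t$ as $m^{-t}$ times the $t$-th moment of a sum of $m$ independent Rademacher (equivalently, centred Bernoulli) variables. Where you diverge is in how that moment is estimated: the paper invokes Hoeffding's inequality and the tail-integral formula $\E|X|^t=\int_0^\infty tu^{t-1}\Pr[|X|\ge u]\,\mathrm{d}u$, reducing to a Gaussian-type integral that evaluates to $2(m/2)^{t/2}(t/2)!\le 2(m/2)^{t/2}(t/2)^{t/2}$, whence the factor $2$ in the stated bound. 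You instead count the nonvanishing terms in the expansion of $\E\big[\big(\sum_j\varepsilon_j\big)^{2k}\big]$ directly: every tuple in which each index occurs an even number of times is covered by at least one of the $(2k-1)!!$ perfect matchings of the positions, and each matching is consistent with exactly $m^k$ labelings, so the moment is at most $(2k-1)!!\,m^k\le(2k)^km^k$; this matching/union-bound step is sound and is the standard Khintchine-type (Gaussian-domination) estimate. Your route is more elementary (no Hoeffding, no Gamma-function evaluation) and even yields the marginally sharper bound $(t/m)^{t/2}$ for even $t$, which of course implies the lemma as stated; the paper's route is no harder and generalises painlessly to sub-Gaussian summands, but for this lemma the two are interchangeable. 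Your closing remark attributing the factor $2$ to the two parities of $t$ is slightly off---in the paper it comes from the constant in Hoeffding's tail bound---but this is cosmetic and does not affect correctness.
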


\begin{proof} If $t$ is odd, then $\Pr [W(t)= \0]=0$, so suppose from now that $t$ is even. Let $\sfP\in \M_{\{0,1\}^m\times \{0,1\}^m}(\R)$ denote the transition matrix of the  random walk $W$, i.e.,
$$
\forall f:\{0,1\}^m\to \R,\ \forall x\in \{0,1\}^m,\qquad \sfP f(x)=\frac{1}{m}\sum_{i=1}^m f(x+e_i\bmod 2).
$$
Then, $\Pr [W(t)= \0]=(\sfP^{t})_{\0\0}$. By symmetry, all of the $2^m$ diagonal entries of $\sfP^{t}$ are equal to each other, so $(\sfP^{t})_{\0\0}=\trace(\sfP^{t})/2^m$.
For every $S\subset \{0,1\}^m$, the Walsh function $(x\in \{0,1\}^m)\mapsto (-1)^{\sum_{i\in S} x_i}$ is an eigenvector of $\sfP$ whose eigenvalue equals $1-2|S|/m$. Consequently,
\begin{equation}\label{eq:ED2}
\Pr [W(t)= \0]=\frac{1}{2^m}\trace(\sfP^{t})=\frac{1}{2^m}\sum_{k=0}^m \binom{m}{k}\bigg(1-\frac{2k}{m}\bigg)^{t}.
\end{equation}

Suppose that $\beta_1,\ldots,\beta_m$ are independent $\{0,1\}$-valued unbiased Bernoulli random variables, namely, $\Pr[\beta_i=0]=\Pr[\beta_i=1]=1/2$ for any $i\in \m$. By Hoeffding's inequality (e.g.,~\cite[Theorem~2.2.6]{Ver18}),
\begin{equation}\label{eq:hoef}
\forall u\ge 0,\qquad \Pr \Bigg[\bigg|\sum_{i=1}^m \Big(\beta_i-\frac12\Big)\bigg|\ge u\Bigg]\le 2e^{-\frac{2u^2}{m}}.
\end{equation}
Observing that the right hand side of~\eqref{eq:ED2} is equal to the expectation of $\big(1-\frac{2}{m}\sum_{i=1}^m\beta_i\big)^{t}$, we see that
\begin{multline*}
\Pr [W(t)= \0]\stackrel{\eqref{eq:ED2}}{=}\left(-\frac{2}{m}\right)^{t}\E\Bigg[\bigg(\sum_{i=1}^m \Big(\beta_i-\frac12\Big)\bigg)^{t}\Bigg]= \left(\frac{2}{m}\right)^{t}\int_0^\infty t u^{t-1}\Pr \Bigg[\bigg|\sum_{i=1}^m \Big(\beta_i-\frac12\Big)\bigg|\ge u\Bigg]\ud u\\\stackrel{\eqref{eq:hoef}}{\le} 2t\left(\frac{2}{m}\right)^{t}\int_0^\infty u^{t-1}e^{-\frac{2u^2}{m}} \ud u=2\left(\frac{2}{m}\right)^{\frac{t}{2}}\left(\frac{t}{2}\right)!\le 2\left(\frac{2}{m}\right)^{\frac{t}{2}}\left(\frac{t}{2}\right)^{\frac{t}{2}}=2\left(\frac{t}{m}\right)^{\frac{t}{2}}.\tag*{\qedhere}
\end{multline*}
\end{proof}

With Lemma~\ref{lem:random walk}  at hand, we can now prove Lemma~\ref{lem:LDPC}.

\begin{proof}[Proof of Lemma~\ref{lem:LDPC}]  Consider the random matrix $\sfA\in \M_{m\times n}(\{0,1\})$ whose columns are independent identically distributed copies $W_1(d),\ldots,W_n(d)$  of $W(d)$, where $W(0)=\0,W(1),W(2),\ldots$ is the standard random walk on $\{0,1\}^m$ as in Lemma~\ref{lem:random walk}.  By design, this means that each column of $\sfA$ has at most $d$ nonzero entries. Fixing $(i,j)\in \m\times \n$, if $W_j(d,i)=1$, then in at least one of the $d$ steps of the random walk that generated $W_j(d)$ the $i$th coordinate was changed.  The probability of the latter event equals $1-(1-1/m)^d$. Hence, $\Pr[W_j(d,i)=1]\le 1-(1-1/m)^d\le d/m$ and therefore for every fixed $S\subset \n$, the probability that $W_j(d,i)=1$ for every $j\in S$ is at most  $(d/m)^{|S|}$. Consequently, the probability that all of the rows of $\sfA$ have at most $\ell=\lceil 4dn/m\rceil$ nonzero entries is at least
$$
1-m\binom{n}{\ell} \left(\frac{d}{m}\right)^\ell\ge 1-m\left(\frac{en}{\ell}\right)^\ell \left(\frac{d}{m}\right)^\ell=1-m\left(\frac{edn}{ m\ell}\right)^\ell\ge 1-m\left(\frac{e}{4}\right)^{4\log m}\ge \frac13,
$$
where the first step is an application of Stirling's formula, the penultimate step uses $\ell\ge 4dn/m$ and the assumption $n\ge (m\log m)/d$, and the final step holds because $m\ge 3$.

It therefore suffices to prove that with probability greater than $2/3$ the vectors $\{W_i(d)\}_{i\in S}\subset \{0,1\}^m$ are linearly independent over $\Z/(2\Z)$ for every $\emptyset \neq S\subset \n$ with $|S|\le  s$, where $s\in \N$ satisfies~\eqref{eq:s assumption} and the universal constant $c>0$ that appears in~\eqref{eq:s assumption} will be specified later;  see~\eqref{eq:specify c}.
So, it suffices to prove that with probability greater than $2/3$  we have $\sum_{i\in S} W_i(d)\not\equiv \0\bmod 2$ for every $\emptyset\neq S\subset \n$ with $ |S|\le s$. Hence, letting $D$ denote the number of $\emptyset\neq S\subset \n$ with $ |S|\le s$ that satisfy $\sum_{i\in S} W_i(d)\equiv \0\bmod 2$, it  suffices to prove that $2/3 <\Pr[D=0]=1-\Pr[D\ge 1]$. Using Markov's inequality, it follows that the proof of Lemma~\ref{lem:LDPC} will be complete if we demonstrate that $\E[D]<1/3$.

The expectation of $D$ can be computed exactly. Indeed,
\begin{equation}\label{eq:ED1}
\E[D]=\E\bigg[\sum_{\substack{S\subset \n\\1\le |S|\le s}} \1_{\left\{\sum_{i\in S} W_i(d)\equiv \0\bmod 2\right\}}\bigg]=\sum_{r=1}^s \binom{n}{r}\Pr [W(dr)= \0],
\end{equation}
where we used the fact that $\sum_{i\in S} W_i(d)\bmod 2\in \{0,1\}^m$ has the same distribution as $W(d|S|)$ for every $\emptyset \neq S\subset \n$. By substituting the conclusion of Lemma~\ref{lem:random walk} into~\eqref{eq:ED1} we see that
\begin{equation}\label{eq:ED bound}
\E[D]\le2\sum_{r=1}^s \binom{n}{r}\left(\frac{dr}{m}\right)^{\frac{dr}{2}}\le  2\sum_{r=1}^s \bigg(\frac{e d^{\frac{d}{2}}r^{\frac{d}{2}-1}n}{m^{\frac{d}{2}}}\bigg)^r,
\end{equation}
where in the last step we bounded the binomial coefficient using Stirling's formula. For every $r\in \{1,\ldots,s\}$,
\begin{equation}\label{eq:for geometric}
\frac{ed^{\frac{d}{2}}r^{\frac{d}{2}-1}n}{m^{\frac{d}{2}}}\le  \frac{ed^{\frac{d}{2}}s^{\frac{d}{2}-1}n}{m^{\frac{d}{2}}}\stackrel{\eqref{eq:s assumption}}{\le}edc^{\frac{d}{2}-1}<\frac17,
\end{equation}
provided that
\begin{equation}\label{eq:specify c}
c<\inf_{d\ge 3} \left(\frac{1}{7ed}\right)^{\frac{2}{d-2}}\in (0,1).
\end{equation}
Therefore, when~\eqref{eq:specify c} holds we may substitute~\eqref{eq:for geometric} into~\eqref{eq:ED bound} to get that $\E[D]<2\sum_{r=1}^\infty \frac{1}{7^{r}}=\frac13$.
\end{proof}

We can now prove Lemma~\ref{lem:the matrix that we need}, thus concluding the proof of Theorem~\ref{thm:main Q version}.

\begin{proof}[Proof of Lemma~\ref{lem:the matrix that we need}] We will prove the following stronger statement (Lemma~\ref{lem:the matrix that we need} is its special case $\e=1$). If $0<\e\le 2$ and $m,n\in \N$ satisfy $2+\lfloor 2/\e\rfloor\le m\le n$ and $n\ge (m\log m)/(2+\lfloor 2/\e\rfloor)$, then there exist $s\in \N$ with $s\gtrsim \e m^{1+\e}/n^\e$, and $\sfB\in \M_{m\times n}(\Z)$ such that any $s$ of the columns of $\sfB$ are linearly independent, the rows of $\sfB$ are linearly independent, and
\begin{equation*}
\|\sfB\|_{\ell_2^n\to \ell_2^m}\lesssim\frac{1}{\e}\sqrt{\frac{n}{m}}.
\end{equation*}

Indeed, apply Lemma~\ref{lem:LDPC} with $d=2+\lfloor 2/\e\rfloor\ge 3$ (equivalently, $d\ge 3$ is the largest integer such that $2/(d-2)\ge \e$) to deduce that there exist an integer $s$ with
$$
s\asymp \frac{1}{d}\left(\frac{m^d}{n^2}\right)^{\frac{1}{d-2}}=\frac{m}{d} \left(\frac{m}{n}\right)^{\frac{2}{d-2}}\asymp \e m \left(\frac{m}{n}\right)^{\e}=\frac{\e m^{1+\e}}{n^\e},
$$
and a matrix  $\sfA\in \M_{m\times n}(\{0,1\})\subset \M_{m\times n}(\Z)$ such that any $s$ of the columns of $\sfA$ are linearly independent over $\Z/(2\Z)$, every column of $\sfA$ has at most $d$ nonzero entries, and every row of $\sfA$ has at most $5dn/m$ nonzero entries. If a set of vectors $v_1,\ldots,v_s\in \{0,1\}^m$ is linearly independent over  $\Z/(2\Z)$, then it is also linearly independent over $\R$ (e.g., letting $\mathsf{V}\in \M_{m\times s}(\{0,1\})$ denote the matrix whose columns are  $v_1,\ldots,v_s$, the latter requirement is equivalent to the determinant of $\mathsf{V}^*\mathsf{V}\in \M_s(\{0,1\})$ being an odd integer, so in particular it does not vanish). Hence, any $s$ of the columns of $\sfA$ are linearly independent over $\R$. Also,
\begin{equation*}\label{eq:bound op by row column}
\|\sfA\|_{\ell_2^n\to \ell_2^m}\le \Big(\max_{i\in \m} \sum_{j=1}^n |\sfA_{ij}|\Big)^\frac12\Big(\max_{j\in \n} \sum_{i=1}^m |\sfA_{ij}|\Big)^\frac12\le \sqrt{\frac{5dn}{m}}\cdot\sqrt{d}\asymp \frac{1}{\e}\sqrt{\frac{n}{m}},
\end{equation*}
where the first step is a standard bound  which holds for any $m$-by-$n$ real matrix (e.g.~\cite[Corollary~2.3.2]{GvL13}). Thus, $\sfA$ has all of the properties that we require  from the  matrix $\sfB$ in Lemma~\ref{lem:the matrix that we need}, except that we do not know that $\sfA$ has rank $m$, but Observation~\ref{obs:linear algebra} remedies this (minor) issue.
\end{proof}

We end by asking the following question:

\begin{question}\label{Q:maxproj} Fix $n\in \N$. Does there exist an integer parallelotope $K\subset \R^n$ such that the $(n-1)$-dimensional area of the orthogonal projection $\proj_{\theta^\perp}K$ of $K$ along any direction $\theta\in S^{n-1}$ is at most $n^{o(1)}$?
\end{question}

An application of Cauchy's surface area formula (see~\cite[Section~5.5]{KR97}), as noted in, e.g.,~\cite[Section~1.6]{Nao21}, shows that a positive answer to Question~\ref{Q:maxproj} would imply Theorem~\ref{thm:main}. Correspondingly, a positive answer to Question~\ref{Q:maxproj} with $n^{o(1)}$ replaced by $O(1)$ would imply a positive answer to Question~\ref{Q:sharp}.

Apart from the  intrinsic geometric interest of Question~\ref{Q:maxproj},  if it had a positive answer, then we would deduce using~\cite{Nao21} that there exists an integer parallelotope $K\subset \R^n$ such that the normed space $\X$ whose unit ball is $K$ has certain desirable nonlinear properties, namely, we would obtain an improved randomized clustering of $\X$ and an improved extension theorem for Lipschitz functions on subsets of $\X$; we refer to~\cite{Nao21}  for the relevant formulations since including them here would result in a substantial digression.

\bibliographystyle{amsplain0}
\bibliography{parallelotope.bib}

 \end{document}